\documentclass[12pt,psamsfonts]{amsart}
\usepackage{amsmath}
\usepackage{yhmath}
\usepackage{ booktabs}
\usepackage{graphicx,epstopdf,subfigure}
\usepackage{amsthm}
\usepackage{amssymb}
\usepackage{amscd}
\usepackage{amsfonts}
\usepackage{amsbsy}
\usepackage[dvips]{psfrag}

\usepackage{url}
\usepackage{epsfig}
\usepackage{latexsym}
\usepackage[mathscr]{eucal}
\usepackage{graphics}
\usepackage{multirow}
\usepackage{color}
\usepackage{calc}
\usepackage{xcolor}

\topmargin=-1.4cm \textheight=24cm \textwidth=16cm
\oddsidemargin=0.5cm
\evensidemargin=0.5cm
\allowdisplaybreaks \allowdisplaybreaks[4]

\newcommand{\tabincell}[2]{\begin{tabular}{@{}#1@{}}#2\end{tabular}}

\newcommand{\w}\widetilde

\newtheorem {theorem} {Theorem} 
\newtheorem {proposition} [theorem] {Proposition}
\newtheorem {corollary} [theorem] {Corollary}

\newtheorem {lemma} [theorem] {Lemma}
\newtheorem {definition} {Definition}
\newtheorem {remark} {Remark}

\parskip 0.2cm

\begin{document}
	
	\title[
Generalized polynomial Li\'enard system with   global centers]
	{ A sufficient and necessary condition of  generalized polynomial Li\'enard systems with global centers} 
	
	\author[H. Chen et al. ]
	{
		Hebai Chen$^{1}$,  Zhijie Li$^{1}$,  Rui Zhang$^{1}$
	}
	
	\address{$^1$
		School of Mathematics and Statistics, HNP-LAMA, Central South University,
		Changsha, Hunan 410083, P. R. China
	}
	\email{chen\_hebai@csu.edu.cn (Chen),
		li\_zhijie@csu.edu.cn (Li),
		zhang\_rui@csu.edu.cn (Zhang).
	}
	\subjclass[2010]{34C05, 34C08}
	
	\keywords{generalized polynomial Li\'enard system; global center; bounded orbit; nilpotent equilibrium}
	
			\begin{abstract}
			The aim of this paper is to give a sufficient and necessary condition of the generalized polynomial Li\'enard system with a global center (including
			linear typer and nilpotent type).
			Recently, Llibre and Valls [{\it J. Differential Equations}, {\bf 330} (2022), 66-80] gave a sufficient and necessary condition of the generalized polynomial Li\'enard system with a  linear type global center. It is easy to see that our sufficient and necessary condition is more easy
			by comparison.
			In particular, we provide the explicit expressions of all the generalized polynomial Li\'enard differential systems of degree $5$ having a global center at the origin
			and the explicit expression of a generalized polynomial Li\'enard differential system of  indefinite degree having a global center at the origin.
		\end{abstract}
		
	\maketitle
	\section{ Introduction and main results }
	
	In the beginning, we refer	
	to  \cite{LV}
	and give the following definitions on centers and global centers.
	\begin{definition}
		An equilibrium $p$ is a center  of a planar differential equation if there is 
		a neighbourhood $U$ of $p$  is full of closed orbits.
	\end{definition}
 Notice that	the notion of center traces back to the investigations of Poincar\'e \cite{Poin} and Dulac \cite{Dulac}.

	Let the period annulus of the center $p$ be the maximal connected set of periodic orbits surrounding the center $p$ and having $p$ in its boundary.
	\begin{definition}
		$p$ is a global center if and only if its period annulus is $\mathbb{R}^2 \setminus \{q\}$.
	\end{definition}
		As introduced in  \cite{LV}, some researchers began to  study global centers in the 1990s, see \cite{Conti,Conti98,Gale,Gar,Gar2,HLX,ZLL}.

Many mathematicans such as Smale \cite{Smale91,Smale} and Lins et. al. \cite{LMP} were interested in 	
	the following polynomial Li\'enard system
		\begin{equation}
	\begin{cases}
	\dot{x}=y,		\\
	\dot{y}=-g(x)-f(x)y\\
	\end{cases}
	\label{1}
	\end{equation} 
since system
\eqref{1} has widely real world applications (see \cite{NB}) and  important theoretical consequences (see \cite{CLW,ZDHD}),
	where 
	$$
	g(x)=\sum_{i=r}^{m}a_ix^i,\qquad
	f(x)=\sum_{i=s}^{n}b_ix^i
	$$
	with $m, n, r, s\in\mathbb{N}$  and  $a_ra_mb_sb_n\ne0$, and the dot  represents the derivative of the independent variable $t$.
	Recently,  Llibre and Valls \cite{LV} gave a sufficient and necessary condition of the polynomial Li\'enard system
\eqref{1}
	with a  linear type global center.
	Moreover,
	Llibre and Valls \cite{LV} provided the explicit expressions of all the generalized polynomial Li\'enard differential systems of degree $3$ having a global center at the origin, and the explicit expression of a generalized polynomial Li\'enard differential system of degree $5$ having a global center at the origin.
	In \cite{LV},
	the sufficient and necessary condition of Li\'{e}nard system \eqref{1} with a linear type global center at the origin is 
	that the following conditions hold:
		\begin{itemize}
		\item[\bf (a)] $xg(x)>0$ for all $x\ne0$;
		\item[\bf (b)] There exist real polynomials $h$, $f_1$ and $g_1$ such that
		$$
		f(x)=f_1(h(x))h_1'(x), \  g(x)=g_1(h(x))h_1'(x)
		$$	
		with $h'(0)=0$ and $h''(0)\neq0$; 
		\item[\bf (c)] ${\deg}g=l$ is odd, and ${ \deg}g>1+{ \deg}f$;
		\item[\bf (d)] The local phase portrait of the singular point localized at the origin of the polynomial
		differential system
		\[
		\dot u=uv^{l-1}f\left(\frac{u}{v}\right)-uv^lg\left(\frac{u}{v}\right)-v^{l-1}, \
		\dot v=v^l\left(f\left(\frac{u}{v}\right)-vg\left(\frac{u}{v}\right)\right)	
		\]
		is formed by two hyperbolic sectors.
	\end{itemize}
	However, it's not easy  to verify the conditions {\bf (b)} and {\bf (d)}.
	Notice that the global center can have the  three classifications:
	linear type, nilpotent type and degenerate type,
	where the linear type center is that the Jacobian of the system evaluated at a center has purely imaginary eigenvalues,
	the nilpotent type center is that it has both eigenvalues zero but its linear part is not identically zero,
		the degenerate type center is that it has  its linear part identically zero. 
	Notice that
		the global center of Li\'enard system \eqref{1} cannot be degenerate type since the Jacobian matrix at the origin of Li\'enard system \eqref{1} is
	\begin{eqnarray*}
	\left[\begin{array}{l}
0, \ \   \   \    \   \   \  \   \   \ 1
	\\
-g'(0), ~ -f(0)
	\end{array}\right].
	\end{eqnarray*} 	
	Naturally, based on the results of \cite{LV}, we have the following three questions:
	\begin{itemize}
		\item[\bf (Q1)] Can we give a new sufficient and necessary condition such that 
		the condition can be verified easier?
		\item[\bf (Q2)]  Can we give a  sufficient and necessary condition on the  polynomial  Li\'{e}nard system \eqref{1} with a nilpotent type global center at the origin?
		\item[\bf (Q3)]  Can we provide the explicit expressions of all the generalized polynomial Li\'enard differential systems of degree $5$ having a global center (including linear type and nilpotent type) at the origin?
	\end{itemize}

	In order to answer the question {\bf (Q1)}, we give the following theorem.

	\begin{figure}[hpt]
	\centering
	\includegraphics[scale=0.7]{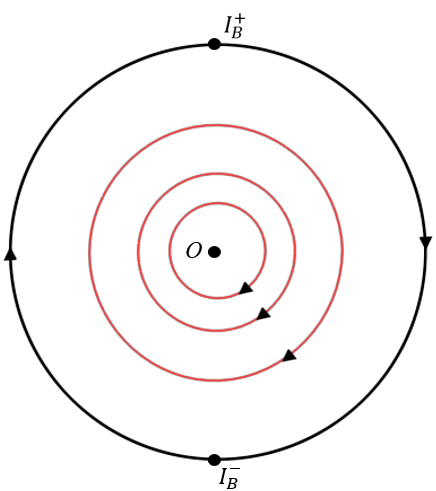}
	\caption{Global center of system \eqref{1}.}
	\label{tu3}
\end{figure}

\begin{theorem}
	\label{thm1}
	System \eqref{1} has a linear type global center at the origin,   if and only if the following conditions hold:
	\begin{itemize}
		\item[\bf (\romannumeral1)]$xg(x)>0$ for all $x\ne0$;
		\item[\bf (\romannumeral2)] $r=1$, $s\geq 1$, $a_r>0$; 
		\item[\bf (\romannumeral3)] $m$ is odd,  $m>2n+1$, $a_m>0$; or  $m=2n+1$ and $4(n+1)a_mb_n^{-2}>1$;
		\item[\bf (\romannumeral4)] $F(x_1)=F(x_2)$ if 
		$G(x_1)=G(x_2)$ for all $x_1<0<x_2$, where $G(x)=\int_{0}^{x}g(\xi)d\xi$.
	\end{itemize}
	Moreover, the global center of system \eqref{1}  is as shown in {\rm Figure \ref{tu3}}.
\end{theorem}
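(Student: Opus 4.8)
The plan is to pass to the Li\'enard plane and then separate the argument into a local analysis at the origin and a global analysis at infinity, matching each of {\bf (\romannumeral1)}--{\bf (\romannumeral4)} to a piece of Llibre--Valls' conditions {\bf (a)}--{\bf (d)} so that the sufficient and necessary condition of \cite{LV} can be invoked. First I would apply the change of variables $u=y+F(x)$, where $F(x)=\int_0^x f(\xi)\,d\xi$, turning \eqref{1} into $\dot x=u-F(x),\ \dot u=-g(x)$. The Jacobian displayed in the excerpt has purely imaginary eigenvalues exactly when $f(0)=0$ and $g'(0)>0$; since the origin is an equilibrium only if $g(0)=0$, i.e. $r\ge1$, and $f(0)=b_0$ vanishes iff $s\ge1$, while $g'(0)=a_1>0$ forces $r=1$ and $a_r>0$, this identifies the linear-type requirement precisely with {\bf (\romannumeral2)}. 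Because every finite equilibrium lies on $\{u=0,\ g(x)=0\}$, a global (hence unique-equilibrium) center forces $g$ to have no nonzero real root; together with $g'(0)>0$ this yields $g(x)>0$ for $x>0$ and $g(x)<0$ for $x<0$, which is {\bf (\romannumeral1)}, and in turn forces $m$ to be odd with $a_m>0$.

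Next I would treat the center-versus-focus condition. For a symmetric Li\'enard system the origin is a center iff $F$ is constant on the level sets of $G$, which is exactly the implication in {\bf (\romannumeral4)}; I would also record the equivalence of {\bf (\romannumeral4)} with the composition condition {\bf (b)} by taking $h=G$, so that $h'(0)=g(0)=0$ and $h''(0)=g'(0)=a_1\ne0$, and {\bf (b)} becomes $g=g_1(G)\,g$ and $f=f_1(G)\,g$: then $g_1\equiv1$ works and $f_1=dF/dG$ exists precisely when $F$ descends to a function of $G$, i.e. {\bf (\romannumeral4)}. The upgrade from a local to the global statement in {\bf (\romannumeral4)} is clean: if $G(x_1)=G(x_2)$ but $F(x_1)\ne F(x_2)$ for some $x_1<0<x_2$, the return map along the corresponding orbit is not the identity, so that orbit fails to close, contradicting globalness; conversely {\bf (\romannumeral4)} makes the involution $\sigma$ defined by $G(\sigma(x))=G(x)$ a global symmetry of the reduced system, forcing every bounded orbit to be closed.

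The heart of the argument, and the step I expect to be the main obstacle, is the boundedness/global part, where I must show that {\bf (\romannumeral3)} is equivalent to {\bf (c)} together with the two-hyperbolic-sectors condition {\bf (d)}. I would Poincar\'e-compactify the reduced system and resolve the (highly degenerate) equilibrium on the equator coming from the $x$-direction by directional or quasi-homogeneous blow-ups. The competition between the terms of orders $m$ and $2n+1$ is exactly what the inequality in {\bf (\romannumeral3)} measures. Seeking invariant curves $u=\beta x^{n+1}$, so that $u-F(x)\sim(\beta-\tfrac{b_n}{n+1})x^{n+1}$, and balancing $\dot u=-g(x)\sim-a_mx^{m}$ against $\beta(n+1)x^{n}(u-F)$ gives, in the borderline case $m=2n+1$, the quadratic $(n+1)\beta^2-b_n\beta+a_m=0$. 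The absence of a real root of this quadratic is what prevents trajectories from following such a curve out to infinity and makes the desingularized infinite equilibrium consist of two hyperbolic sectors; this is the negativity of the discriminant $b_n^2-4(n+1)a_m$, i.e. exactly {\bf (\romannumeral3)}'s inequality $4(n+1)a_mb_n^{-2}>1$. When $m>2n+1$ the potential term dominates for every $a_m>0$, recovering {\bf (c)}--{\bf (d)}, whereas the range $n+1<m<2n+1$ admitted by {\bf (c)} alone must be shown to violate {\bf (d)} (the sectors fail to be purely hyperbolic), which is why {\bf (\romannumeral3)} excludes it.

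Finally I would assemble the pieces. Under {\bf (\romannumeral1)}--{\bf (\romannumeral4)} the equivalences above produce {\bf (a)}--{\bf (d)}, so the result of \cite{LV} yields a linear-type global center with the phase portrait of Figure \ref{tu3}; conversely a linear-type global center gives {\bf (a)}--{\bf (d)}, which the same equivalences translate back into {\bf (\romannumeral1)}--{\bf (\romannumeral4)}. Alternatively, avoiding \cite{LV}, one can argue directly: {\bf (\romannumeral1)},{\bf (\romannumeral2)} give a unique linear-type equilibrium, {\bf (\romannumeral4)} gives the global reversibility making the $\omega$-limit of each orbit a symmetric closed curve, and {\bf (\romannumeral3)} guarantees through the compactification that no orbit escapes to infinity, so Poincar\'e--Bendixson forces every nontrivial orbit to be periodic. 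Either way the delicate point remains the desingularization at infinity and the sharp borderline case $m=2n+1$.
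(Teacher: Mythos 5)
Your overall architecture (local analysis at the origin, blow-up analysis at infinity for {\bf (\romannumeral3)}, plus a closure condition for {\bf (\romannumeral4)}) parallels the paper's, and your blow-up sketch for the borderline case $m=2n+1$ --- the quadratic $(n+1)\beta^2-b_n\beta+a_m=0$ having no real root, i.e. $b_n^2-4(n+1)a_m<0$ --- is exactly the computation behind the paper's Lemma \ref{lem7} and its appendix. But the route you choose for {\bf (\romannumeral4)} has a genuine gap. The paper does not reduce to \cite{LV}: it proves a self-contained criterion (Theorem \ref{thm3}) stating that all orbits surrounding the origin close up iff $F_1(w)\equiv F_2(w)$, where $F_1,F_2$ are $F$ read through the two inverse branches of $w(x)=\left((r+1)G(x)\right)^{1/(r+1)}$. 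Your reduction of {\bf (\romannumeral4)} to the composition condition {\bf (b)} of \cite{LV} ``by taking $h=G$'' is false: {\bf (b)} with $h=G$ forces $f=f_1(G)\,g$ with $f_1$ a real \emph{polynomial}, i.e. $F$ a polynomial in $G$, which is strictly stronger than {\bf (\romannumeral4)}. Concretely, take $f(x)=2x$, $g(x)=2x+4x^3$, so $F=x^2$, $G=x^2+x^4$: this satisfies {\bf (\romannumeral1)}--{\bf (\romannumeral4)} (here $m=2n+1=3$ and $4(n+1)a_mb_n^{-2}=8>1$), hence is a global center, and {\bf (b)} does hold --- but only with $h(x)=x^2$, not with $h=G$, since $x^2$ is not a polynomial in $x^2+x^4$ (degree count). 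Repairing this step amounts to proving that the level-set condition {\bf (\romannumeral4)} implies the existence of \emph{some} admissible polynomial $h$; that is a Christopher-type composition theorem, a substantial result you neither prove nor cite.

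Your fallback argument ``avoiding \cite{LV}'' is sound for sufficiency --- the involution $\sigma$ with $G\circ\sigma=G$ together with {\bf (\romannumeral4)} does give a reversing symmetry, which is the paper's Theorem \ref{thm3}, Step 1, in different clothing --- but it proves only half of what is needed. For necessity you assert that if {\bf (\romannumeral4)} fails then ``the return map along the corresponding orbit is not the identity, so that orbit fails to close.'' That is precisely the nontrivial point, and it does not follow merely from the unavailability of the symmetry: a priori all orbits could still close even though the symmetric mechanism is absent. The paper's proof of this implication (Step 2 of Theorem \ref{thm3}) requires real work: one shows $F_1,F_2$ are analytic in $w$, so if $F_1\not\equiv F_2$ their coincidence set is isolated; one then takes the first coincidence value $w_3$, picks a point $P$ on $y=F_2(w)$ with abscissa $w^*\in(0,w_3)$, and applies the comparison theorem twice to show the orbit arc through $P$ meets the $y$-axis at two distinct points. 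Without this (or an equivalent argument), your proof establishes only that {\bf (\romannumeral1)}--{\bf (\romannumeral4)} are sufficient, not that they are necessary.
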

In order to answer the question {\bf (Q2)}, we give the following theorem.

\begin{theorem} 
	\label{thm2}
	System \eqref{1} has a nilpotent type global center at the origin,  if and only if  conditions {\bf (\romannumeral1)},  {\bf (\romannumeral3)},  {\bf (\romannumeral4)} in {\rm Theorem \ref{thm1}} and the following condition hold:	
	\begin{itemize}
		\item[\bf (\romannumeral2$^*$)]$r$ is odd,   $2<r<2s+1$, $a_r>0$;
		or $r=2s+1\geq3$ and $b_s^2-2(r+1)a_r<0$.
	\end{itemize}
	Moreover, the global center of system \eqref{1}  is as shown in {\rm Figure \ref{tu3}}.
\end{theorem}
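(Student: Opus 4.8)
The plan is to treat the nilpotent case in close parallel with the linear-type Theorem~\ref{thm1}, isolating the single place where the two differ. First observe that the Jacobian at the origin is nilpotent (both eigenvalues zero, linear part nonzero) exactly when its trace $-f(0)$ and its determinant $g'(0)$ both vanish, i.e. when $f(0)=0$ and $g'(0)=0$; since $a_rb_s\neq0$ this is equivalent to $s\geq1$ and $r\geq2$. Thus, before any dynamics, the nilpotent hypothesis already forces the lower indices to satisfy $s\ge1$, $r\ge2$, and I will show that the remaining content of $\mathbf{(ii^*)}$ is precisely the monodromy requirement at such a degenerate origin. Throughout I pass to the Li\'enard plane by setting $u=y+F(x)$ with $F(x)=\int_0^xf$, which turns \eqref{1} into $\dot x=u-F(x)$, $\dot u=-g(x)$ and makes the reflection structure transparent.

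For the local analysis I would split sufficiency into monodromy and then center. Condition $\mathbf{(i)}$ forces $g(x)=a_rx^r+\cdots$ to have $r$ odd and $a_r>0$ (otherwise $xg(x)$ changes sign near $0$), so $G(x)=\int_0^xg$ is positive definite and on a neighbourhood of $0$ determines an analytic involution $\sigma$ with $\sigma(0)=0$, $\sigma(x)x<0$ for $x\ne0$, and $G(\sigma(x))=G(x)$. To decide whether the origin is monodromic I would apply Andreev's classification to \eqref{1} (equivalently a quasi-homogeneous blow-up with weights $1$ and $(r+1)/2$). Comparing the weight $r$ of the restoring part $-g$ with the weight $s+\tfrac{r+1}{2}$ of the damping part $-f(x)y$, the damping is negligible precisely when $r<2s+1$, in which case the principal blow-up field is the Hamiltonian center $\dot{\bar x}=\bar y,\ \dot{\bar y}=-a_r\bar x^{\,r}$; it is balanced precisely when $r=2s+1$, in which case the candidate directions $\bar y=c\bar x^{\,s+1}$ satisfy $(s+1)c^2+b_sc+a_r=0$, so there are no real characteristic directions — and the origin is monodromic — if and only if $b_s^2-4(s+1)a_r<0$, which is $b_s^2-2(r+1)a_r<0$. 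These two alternatives are exactly $\mathbf{(ii^*)}$.

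To upgrade monodromy to a center I would use reversibility rather than compute focal quantities. Setting $z=\operatorname{sign}(x)\sqrt{2G(x)}$ (well defined by $\mathbf{(i)}$) and rescaling time by the positive factor $g(x)/z$ transforms the Li\'enard system into $\dot z=u-\tilde F(z)$, $\dot u=-z$ with $\tilde F(z)=F(x(z))$; since $\sigma$ becomes $z\mapsto-z$, condition $\mathbf{(iv)}$ says exactly that $\tilde F$ is even, so the transformed field is invariant under $(z,u,\tau)\mapsto(-z,u,-\tau)$. A monodromic reversible singularity is a center, and because the time rescaling is a positive reparametrisation away from the origin it carries closed orbits to closed orbits; hence the origin of \eqref{1} is a center, of nilpotent (not linear) type because $r\ge2$ forces $g'(0)=0$. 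In the balanced case $r=2s+1$ the transformed damping has a corner $\tilde F(z)\sim c|z|$ with $c^2=b_s^2/((s+1)a_r)$, and the monodromy condition $c^2<4$ is again $b_s^2-2(r+1)a_r<0$, so the same reversibility argument applies.

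Finally I would obtain the global statement by reusing the part of the argument for Theorem~\ref{thm1} that is insensitive to the local type. Conditions $\mathbf{(i)}$, $\mathbf{(iii)}$, $\mathbf{(iv)}$ are identical in both theorems and govern everything away from the origin: $\mathbf{(i)}$ guarantees that $y=0,\ g(x)=0$ has only the solution $x=0$, so the origin is the unique finite singularity; the reversibility from $\mathbf{(iv)}$ makes every orbit symmetric; and $\mathbf{(iii)}$ — the exact analogue of $\mathbf{(ii^*)}$ for the top-degree coefficients $(m,n,a_m,b_n)$, with the discriminant $b_n^2-2(m+1)a_m<0$ rewritten as $4(n+1)a_mb_n^{-2}>1$ — forces, via the Poincar\'e compactification, the relevant singular point at infinity to be formed by two hyperbolic sectors that bounce every trajectory back, so no orbit escapes. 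A bounded, symmetric flow whose only finite singularity is a center is then a global center, giving Figure~\ref{tu3}. For necessity I would run each implication backwards: nilpotency gives $r\ge2,\,s\ge1$; a center is monodromic, so Andreev forces $\mathbf{(ii^*)}$ and the local part of $\mathbf{(i)}$; a global center has no other finite singularity and no escaping orbit, giving the global part of $\mathbf{(i)}$ and $\mathbf{(iii)}$; and a center is reversible, i.e. $\tilde F$ even, which is $\mathbf{(iv)}$. The main obstacle I anticipate is the balanced case $r=2s+1$, where a single blow-up is not desingularising and the transformed damping is only Lipschitz with a corner, so establishing monodromy and the sharp discriminant $b_s^2-2(r+1)a_r<0$ — and, for necessity, that violating it produces real characteristic directions and hence a non-center — demands the most care; a secondary check is that the infinity analysis of Theorem~\ref{thm1} transfers verbatim, which it does because $\mathbf{(iii)}$ is a statement purely about behaviour at infinity.
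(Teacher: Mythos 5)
Your local analysis and your sufficiency argument run essentially parallel to the paper's proof, even though you derive things rather than cite them: your blow-up/characteristic-direction computation at the origin (including the identity $b_s^2-4(s+1)a_r=b_s^2-2(r+1)a_r$) is exactly what the paper's Lemma \ref{lem6} gets from the classical nilpotent center--focus criterion; your use of the compactification at infinity is the paper's Lemma \ref{lem7}; and your reversibility argument -- condition \textbf{(iv)} makes $\widetilde F$ even, so the transformed system $\dot z=u-\widetilde F(z)$, $\dot u=-z$ is invariant under $(z,u,\tau)\mapsto(-z,u,-\tau)$ and every bounded orbit crossing $z=0$ twice closes up -- is a clean repackaging of the sufficiency half of the paper's Theorem \ref{thm3}, where $F_1\equiv F_2$ makes the two half-plane equations \eqref{z1} and \eqref{z2} identical and uniqueness of solutions closes the orbit. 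Up to this point the proposal is fine.

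The genuine gap is the necessity of \textbf{(iv)}. You dispose of it with the sentence ``a center is reversible, i.e.\ $\widetilde F$ even,'' but this is not a general fact about centers; it is precisely the nontrivial implication that the paper works for. One must show: if $\widetilde F$ is \emph{not} even (equivalently $F_1\not\equiv F_2$ in the paper's notation), then some orbit of \eqref{2} fails to close, so the origin is not a global center. This is Step 2 of the proof of Theorem \ref{thm3}: the paper first proves $F_1$ and $F_2$ are analytic, so $F_1\not\equiv F_2$ forces their coincidence set to be isolated; then, on an interval $(0,w_3)$ where (say) $F_1<F_2$, it picks an orbit through a point of the isocline $y=F_2(w)$ and applies the comparison theorem twice, to the curves through $B'$ and through $A'$, to conclude that the two half-plane integral curves cannot match up at both axis crossings, i.e.\ the orbit is not closed. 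Nothing in your proposal plays this role. Even if you intended to invoke a Cherkas-type local center characterization for Li\'enard systems, that is itself a theorem needing proof or citation (and in the nilpotent case it is not the textbook statement), and it would only give evenness of $\widetilde F$ near $z=0$; you would still need an argument -- e.g.\ analyticity of $\widetilde F(z)-\widetilde F(-z)$ on $z>0$ plus the identity theorem, or the paper's comparison argument -- to propagate the identity to all $z$, which is exactly where the \emph{global} center hypothesis must enter. As written, the ``only if'' half of the theorem for condition \textbf{(iv)} is unproven; note that the paper's route through Theorem \ref{thm3} settles this direction without ever needing a local center characterization, which is what that comparison argument buys.
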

 \begin{remark}
 	Notice that if condition {\bf (\romannumeral1)} holds, we can naturely obtain that  $a_r>0$, $a_m>0$ and $r, m$ must be odd.  
 	On the one hand,
 for readability, we still write  ``$a_r>0$, $a_m>0$ and $m, r$ is odd" in conditions {\bf (\romannumeral2)-(\romannumeral4)}.
 On the other hand, we will prove that the  condition {\bf (\romannumeral3)} can be equivalent  to 
 that there are no orbits  of system \eqref{1} connecting the equilibria at infinity in the Poincar\'e disc.
 \end{remark}

When $g(x)$ is odd, we can give the following corollary.
\begin{corollary}
	\label{thm5}
	Assume that $g(x)$ is an odd function.
System \eqref{1} having a linear type global center at the origin  if and only if
	conditions {\bf (\romannumeral1)},  {\bf (\romannumeral2)},  {\bf (\romannumeral3)} in {\rm Theorem \ref{thm1}} and
	$f(x)$ is odd;
	polynomial Li\'{e}nard system \eqref{1} having a nilpotent type global center at the origin  if and only if
	conditions {\bf (\romannumeral1)},  {\bf (\romannumeral2$^*$)},  {\bf (\romannumeral3)} in {\rm Theorem \ref{thm1}} and
	$f(x)$ is odd.
\end{corollary}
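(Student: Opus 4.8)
The plan is to deduce Corollary \ref{thm5} directly from Theorems \ref{thm1} and \ref{thm2}. The only difference between the corollary and those theorems is that condition {\bf (\romannumeral4)} has been replaced by the requirement that $f$ be an odd function, while conditions {\bf (\romannumeral1)}, {\bf (\romannumeral2)}/{\bf (\romannumeral2$^*$)} and {\bf (\romannumeral3)} are carried over verbatim. Hence the whole content of the proof reduces to a single equivalence: \emph{assuming that $g$ is odd and that condition {\bf (\romannumeral1)} holds, condition {\bf (\romannumeral4)} is equivalent to $f$ being odd.} Once this is shown, both assertions of the corollary are immediate from the respective theorems, and the phase-portrait claim (Figure \ref{tu3}) is inherited from them.

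First I would record the structural consequences of the hypotheses on $G(x)=\int_{0}^{x}g(\xi)\,d\xi$. Since $g$ is odd, $G$ is even. Since condition {\bf (\romannumeral1)} gives $g(x)>0$ for $x>0$ and $g(x)<0$ for $x<0$, the function $G$ is strictly increasing on $[0,\infty)$ and strictly decreasing on $(-\infty,0]$, with a strict global minimum $G(0)=0$. As $g$ is a polynomial whose leading behaviour on the positive axis forces $g(x)\to+\infty$, we get $G(x)\to+\infty$ as $x\to\pm\infty$, so $G$ restricts to a strictly monotone bijection from each half-line onto $[0,\infty)$.

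The key step is to identify, for each fixed $x_2>0$, all $x_1<0$ with $G(x_1)=G(x_2)$. By the monotone-bijection property there is exactly one such $x_1$, and since $G$ is even we have $G(-x_2)=G(x_2)$; hence $x_1=-x_2$ is that unique point. Consequently condition {\bf (\romannumeral4)}, which is vacuous on pairs with $G(x_1)\neq G(x_2)$, collapses to the single requirement $F(-x_2)=F(x_2)$ for all $x_2>0$, i.e. that $F(x)=\int_{0}^{x}f(\xi)\,d\xi$ is even (using $F(0)=0$ to extend to all of $\R$). Finally, differentiating $F(-x)=F(x)$ and invoking $F'=f$ shows that $F$ is even if and only if $f$ is odd. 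This establishes the desired equivalence, and the corollary follows.

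The only genuinely delicate point is the uniqueness of the matching negative preimage $x_1=-x_2$: this rests on $G$ being strictly monotone on $(-\infty,0]$, which needs condition {\bf (\romannumeral1)} rather than merely the oddness of $g$. Without {\bf (\romannumeral1)} the level set $\{G=c\}\cap(-\infty,0)$ could contain several points, and condition {\bf (\romannumeral4)} would no longer reduce to evenness of $F$. I would therefore invoke {\bf (\romannumeral1)} explicitly at this step, after which the remaining verifications are routine.
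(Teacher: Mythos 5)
Your proposal is correct and follows essentially the same route as the paper: both reduce the corollary to the single equivalence that, for $g$ odd, condition {\bf (\romannumeral4)} holds if and only if $f$ is odd, by observing that $G$ is even and strictly monotone on each half-line, so the only pairs with $G(x_1)=G(x_2)$, $x_1<0<x_2$, are $x_1=-x_2$, whence {\bf (\romannumeral4)} collapses to evenness of $F$. If anything, your write-up is slightly more careful than the paper's, since you flag explicitly that the monotonicity of $G$ (hence the uniqueness of the matching preimage) comes from condition {\bf (\romannumeral1)} rather than from the oddness of $g$ alone.
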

In order to answer the question {\bf (Q3)}, we give the following theorem.
\begin{theorem}
	\label{thm4}
	All generalized quintic Li\'{e}nard systems having a linear type global center at the origin of coordinates after a rescaling of the variables $x$, $y$ and $t$ can be written as 
	$$
	\dot{x}=y, \quad \dot{y}=-(x+ax^3+x^5)-bxy,
	$$
  where $a>-2$ and $b \neq 0$.
  
	All generalized quintic Li\'{e}nard systems having a nilpotent type global center at the origin of coordinates after a rescaling of the variables $x$, $y$ and $t$ can be written as 
 	$$
 	\dot{x}=y, \quad \dot{y}=-(x^3+x^5)-cxy,
 	$$
  where
  $c \in \left(-2\sqrt{2},0\right)\cup\left(0,2\sqrt{2}\right)$. 
\end{theorem}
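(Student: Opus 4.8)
The plan is to read Theorem~\ref{thm4} off from the characterizations in Theorems~\ref{thm1} and~\ref{thm2}, specialized to degree $5$, and then to normalize by the rescaling $x=\alpha X$, $y=\beta Y$, $t=\gamma\tau$ with $\beta\gamma=\alpha$, which preserves the Li\'enard form and replaces $g,f$ by $\tilde g(X)=\gamma^{2}\alpha^{-1}g(\alpha X)$ and $\tilde f(X)=\gamma f(\alpha X)$. First I would note that quintic degree forces $m=\deg g=5$: if $m<5$ then degree $5$ needs $n=4$, but condition {\bf (\romannumeral3)} then requires $m>2n+1=9$ or $m=9$, which is impossible. With $m=5$, condition {\bf (\romannumeral3)} confines $f$ to $n\le2$ (as $5>2n+1$ gives $n\le1$ and $5=2n+1$ gives $n=2$). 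For the linear type, {\bf (\romannumeral2)} gives $r=1$; for the nilpotent type, {\bf (\romannumeral2$^{*}$)} forces $r$ odd with $r\ge3$, and since $r=5$ would make $g=a_5x^{5}$ with $G$ even while the only admissible $f=b_2x^{2}$ has $F$ odd, condition {\bf (\romannumeral4)} excludes $r=5$, leaving $r=3$.

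The core of the argument is condition {\bf (\romannumeral4)}, which I would encode through the involution $\sigma$ defined by $G(\sigma(x))=G(x)$, $x\sigma(x)<0$; this $\sigma$ is well defined and analytic because $xg(x)>0$ makes $G$ a strictly increasing well on each side of the origin, and {\bf (\romannumeral4)} is exactly $F\circ\sigma=F$, where $F(x)=\int_{0}^{x}f(\xi)\,d\xi$. The decisive step is an estimate at infinity: the leading term $\tfrac{a_5}{6}x^{6}$ of $G$ gives $\sigma(x)=-x+O(1)$, so $h:=f/g$ obeys $h(x)=h(\sigma(x))=h(-x)+O(x^{-4})$. A nonzero quadratic top term $b_2x^{2}$ in $f$ would give $h(x)\sim b_2/(a_5x^{3})$ but $h(-x)\sim-b_2/(a_5x^{3})$, an order-$x^{-3}$ mismatch that forces $b_2=0$; this kills the case $n=2$ in both types and reduces $f$ to $f(x)=b_1x$ with $b_1\ne0$. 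Now $F(x)=\tfrac{b_1}{2}x^{2}$ is even, so {\bf (\romannumeral4)} becomes $\sigma(x)^{2}=x^{2}$, i.e.\ $\sigma(x)=-x$; hence $G$ is even and $g$ is odd, so $a_2=a_4=0$. Thus the linear type yields $g(x)=a_1x+a_3x^{3}+a_5x^{5}$ and the nilpotent type yields $g(x)=a_3x^{3}+a_5x^{5}$, each with $f(x)=b_1x$.

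It remains to normalize and extract the ranges. For the linear type, choosing $\gamma^{2}=1/a_1$ and $\alpha^{4}=a_1/a_5$ makes the coefficients of $X$ and $X^{5}$ equal to $1$, so $g$ becomes $X+aX^{3}+X^{5}$ with $a=a_3/\sqrt{a_1a_5}$ and $f$ becomes $bX$ with $b=\gamma\alpha b_1\ne0$; condition {\bf (\romannumeral1)}, which amounts to $a_3>-2\sqrt{a_1a_5}$, is precisely $a>-2$, and {\bf (\romannumeral3)} imposes nothing more. For the nilpotent type, $\alpha^{2}=a_3/a_5$ and $\gamma^{2}=a_5/a_3^{2}$ send $g$ to $X^{3}+X^{5}$ and $f$ to $cX$ with $c^{2}=b_1^{2}/a_3$; here {\bf (\romannumeral1)} holds automatically, while the sharp inequality $b_s^{2}-2(r+1)a_r=b_1^{2}-8a_3<0$ from {\bf (\romannumeral2$^{*}$)} becomes $c^{2}<8$, that is $c\in(-2\sqrt{2},0)\cup(0,2\sqrt{2})$, with $c\ne0$ from $b_1\ne0$.

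I expect the main obstacle to be making the involution argument for {\bf (\romannumeral4)} fully rigorous: one must verify that $\sigma$ extends to a globally defined analytic involution on $\R$ and that the expansion $\sigma(x)=-x+O(1)$ is controlled well enough (including its effect on $h$ through $h'$) to isolate the order-$x^{-3}$ coefficient and conclude $b_2=0$. Once the even-degree parts of $f$ and then of $g$ are eliminated, the normalization and the resulting ranges $a>-2$ and $|c|<2\sqrt{2}$ follow by the elementary computations indicated above.
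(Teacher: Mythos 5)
Your proposal is correct and follows essentially the same route as the paper: specialize conditions (i)--(iv) of Theorem \ref{thm1} and (ii$^*$) of Theorem \ref{thm2} to degree five, rule out $n=2$ by an asymptotic argument at infinity based on condition (iv), conclude $f(x)=b_1x$ and then that $g$ is odd, and finally rescale to the two normal forms with the ranges $a>-2$ and $c^2<8$. The only difference is in implementation: where you introduce the involution $\sigma$ and the ratio $h=f/g$ (requiring differentiation of the functional equations and control of $\sigma(x)=-x+O(1)$) to force $b_2=0$, the paper argues more directly that $G(x_1)=G(x_2)$ with $|x_1|$ large gives $x_2=O(|x_1|)$, so the odd cubic leading term of $F$ forces $F(x_1)F(x_2)<0$, contradicting (iv) -- a simpler version of the same idea.
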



The next proposition present a generalized polynomial Li\'enard system of degree $2k+1$ with a linear global center.  

\begin{proposition}
	\label{pro5}
	The following generalized polynomial Li\'{e}nard system of degree $2k+1$
	\begin{equation}
		\label{2m+1}
		\dot{x}=y, \quad \dot{y}=-x-ax^{2k+1}-xy-bx^ly
	\end{equation}
	has a global center at the origin of coordinates if and only if the parameters belong to any one of the following   four parameter spaces 
		\begin{itemize}
		\item[] 
		$\mathcal{S}_1=\{(k,l,a,b)\in\mathbb{N}^2\times\mathbb{R}^2|k>l, l \text{ odd },  a>0, b\ne 0\}$,
	\item[] 
		$\mathcal{S}_2=\{(k,l,a,b)\in\mathbb{N}^2\times\mathbb{R}^2|k=l, l \text{ odd },  a>0, b\ne0, 4(l+1)ab^{-2}>1\}$,
	\item[] 
	$\mathcal{S}_3=\{(k,l,a,b)\in\mathbb{N}^2\times\mathbb{R}^2|k>1,  a>0, b=0\}$,
		\item[] 
		$\mathcal{S}_4=\{(k,l,a,b)\in\mathbb{N}^2\times\mathbb{R}^2|k=1,  a>1/8, b=0\}$.
		\end{itemize}
\end{proposition}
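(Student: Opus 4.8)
The plan is to reduce everything to Theorem~\ref{thm1}. For system \eqref{2m+1} we have $g(x)=x+ax^{2k+1}$ and $f(x)=x+bx^{l}$, so $g'(0)=1$ and $f(0)=0$; hence the Jacobian matrix at the origin is $\left(\begin{smallmatrix}0&1\\-1&0\end{smallmatrix}\right)$, whose eigenvalues are $\pm i$. Therefore any global center of \eqref{2m+1} at the origin is necessarily of linear type, so that ``having a global center'' is equivalent to ``having a linear type global center''. Thus it suffices to determine when conditions \textbf{(\romannumeral1)}--\textbf{(\romannumeral4)} of Theorem~\ref{thm1} hold, reading off $r=1$, $a_r=1$, $m=2k+1$, $a_m=a$, $s=1$, together with $n=l$, $b_n=b$ when $b\neq0$ (respectively $n=1$, $b_n=1$ when $b=0$).

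First I would dispose of the two easy conditions. Since $xg(x)=x^{2}(1+ax^{2k})$, condition \textbf{(\romannumeral1)} holds for all $x\neq0$ exactly when $a>0$ (recall $a\neq0$ as the degree equals $2k+1$). Condition \textbf{(\romannumeral2)} is automatic here: $r=1$, $a_r=1>0$ and $s=1\geq1$. So the real content of the statement lives in conditions \textbf{(\romannumeral3)} and \textbf{(\romannumeral4)}.

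Next I would translate condition \textbf{(\romannumeral3)}, noting that $m=2k+1$ is always odd. When $b\neq0$ we have $n=l$, so the first clause $m>2n+1$ reads $k>l$ (with $a_m=a>0$), and the second clause $m=2n+1$ reads $k=l$ together with $4(l+1)ab^{-2}>1$; these are precisely the defining inequalities of $\mathcal{S}_1$ and $\mathcal{S}_2$, and the second inequality forces $a>0$ automatically. When $b=0$ we have $n=1$, so the two clauses become $k>1$ (with $a>0$) and $k=1$ with $4\cdot2\cdot a>1$, i.e. $a>1/8$; these match $\mathcal{S}_3$ and $\mathcal{S}_4$. The remaining case $k<l$ (for $b\neq0$) makes $m<2n+1$, so \textbf{(\romannumeral3)} fails and no global center exists.

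The step I expect to require the most care is condition \textbf{(\romannumeral4)}. Here $G(x)=\tfrac12 x^{2}+\tfrac{a}{2k+2}x^{2k+2}$ is even and, under $a>0$ (from \textbf{(\romannumeral1)}), strictly increasing on $[0,\infty)$ since $g(x)>0$ for $x>0$, and strictly decreasing on $(-\infty,0]$. Consequently, for $x_1<0<x_2$ the equality $G(x_1)=G(x_2)$ holds if and only if $x_1=-x_2$, so condition \textbf{(\romannumeral4)} is equivalent to $F(-x)=F(x)$ for all $x>0$, i.e. to $F$ being even, where $F(x)=\tfrac12 x^{2}+\tfrac{b}{l+1}x^{l+1}$. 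This evenness holds exactly when $b=0$ or $l+1$ is even, that is, $b=0$ or $l$ odd. Collecting all constraints---$a>0$ from \textbf{(\romannumeral1)}, the dichotomy from \textbf{(\romannumeral3)}, and ``$l$ odd whenever $b\neq0$'' from \textbf{(\romannumeral4)}---yields exactly the four parameter spaces $\mathcal{S}_1,\mathcal{S}_2,\mathcal{S}_3,\mathcal{S}_4$, which completes the proof.
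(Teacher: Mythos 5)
Your proof is correct and follows essentially the same route as the paper: both arguments reduce the statement to checking conditions (i)--(iv) of Theorem~\ref{thm1} and then translate each condition into constraints on $(k,l,a,b)$, arriving at the four sets $\mathcal{S}_1,\ldots,\mathcal{S}_4$. The only cosmetic difference is that the paper disposes of condition (iv) by citing Corollary~\ref{thm5} (for odd $g$ and $f$), whereas you inline that corollary's two-line argument ($G$ even and strictly monotone on each half-line forces $x_1=-x_2$, so (iv) is equivalent to $F$ being even, i.e.\ to $b=0$ or $l$ odd), and you make explicit the observation, left implicit in the paper, that the eigenvalues $\pm i$ of the Jacobian force any global center of \eqref{2m+1} to be of linear type.
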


The organization  of the rest of this paper is as follows. To prove Theorems \ref{thm1} and \ref{thm2}, we state a preliminary result in  {\rm Section \ref{pre}}.   Theorem \ref{thm1}, Theorem \ref{thm2} and Corollary \ref{thm5} are proven in Subsections \ref{sub3.1} and \ref{3.2}, respectively.  Moreover,  all   generalized polynomial Li\'enard differential systems of degree $5$ having a global center  are given in Subsection \ref{sec2} by   Theorems \ref{thm1} and \ref{thm2}, 
i.e., Theorem \ref{thm4} is proven.
Furthermore,
we provide the explicit expression  of a  generalized polynomial Li\'enard differential system of indefinite degree $2k+1$ in  in Subsection \ref{3.5}, i.e., Proposition \ref{pro5} is proven. 
   Finally, an interesting result on the boundedness of all orbits of system \eqref{1} is given when 	conditions {\bf (\romannumeral1)},  {\bf (\romannumeral3)}  in {\rm Theorem \ref{thm1}} hold and condition {\bf (\romannumeral4)} does not hold in Section 
\ref{rc}.

\section{Preliminaries}
\label{pre}
To prove the main results, we introduce a preliminary result about closed orbits of a generalized   Li\'enard system in this section.

  With the following global homeomorphism
  transformation
  \[
  (x,y)\to(x,y-F(x)),
  \]
  system \eqref{1} is changed into
  \begin{equation}
  \label{2}
  \left\{\begin{aligned}
  \dot{x}=&y-F(x),
  \\
  \dot{y}=&-g(x),
  \end{aligned}
  \right.
  \end{equation}
    where $F(x):=\int_{0}^{x}f(\xi)d\xi$.
    Here, we require that
  system  \eqref{2} satisfies $xg(x)>0$ for all $x\ne0$.
Define 
\begin{equation}
\label{z}
w(x)=\left((r+1)G(x)\right)^{\frac{1}{r+1}},
\end{equation}
where
$G(x):=\int_{0}^{x}g(\xi)d\xi$. It follows from $xg(x)>0$ for all $x\ne0$ that   
$G(x)\geq0$ and  $w(x)$ is decreasing in $x<0$, increasing in $x>0$. In other words, we have  $w(x)\geq0$. Thus,  let $x_1(w)$ (resp. $x_2(w)$) be  the branch  of the inverse of $w(x)$ for $x\leq0$ (resp. $x\geq0$).
In the zone $x\leq0$, with the transformation $w=w(x)$, system \eqref{2} is changed into 
\begin{equation}
\label{z1}
\frac{dw}{dy}=\frac{w^{-r}g(x)dx}{dy}=\frac{F(x)-y}{w^r}=\frac{F(x_1(w))-y}{w^r}=:\frac{F_1(w)-y}{w^r}, 
\end{equation}
where $w\geq 0$.
Similarly, in the zone $x\geq 0$, the transformation $w=w(x)$ brings system \eqref{2} to  
\begin{equation}
	\label{z2}
\frac{dw}{dy}=\frac{w^{-r}g(x)dx}{dy}=\frac{F(x)-y}{w^r}=\frac{F(x_2(w))-y}{w^r}=:\frac{F_2(w)-y}{w^r},  
\end{equation}
where $w\geq 0$.

Let $\widehat{ABC}$ be any an orbit arc surrounding the origin of system \eqref{2} in $\mathbb{R}^2$,
as shown in Figure \ref{tu1} (a). Moreover, let $\mathcal{G}$ be a set of such  all orbits $\widehat{ABC}$. Define that $\widehat{ABC}\bigcap\{(x,y)|x\leq0\}$   (resp. $\widehat{ABC}\bigcap\{(x,y)|x\geq0\}$)
  of system \eqref{2} correspond to integral curves $\gamma_1$ in $w$-$y$ plane  of equation  \eqref{z1}
  (resp. $\gamma_2$ in $w$-$y$ plane  of equation    \eqref{z2}),   see Figure \ref{tu1}(b). 
  For simplicity, 
we  respectively  refer to the corresponding points  of  $A$, $B$, $C$ in $x$-$y$ plane as $A'$, $B'$, $C'$ in $w$-$y$ plane.
Then, we can check  that $\gamma_1$ (resp. $\gamma_2$) is the integral curve $\widehat{B'C'}$ (resp. $\widehat{B'A'}$) of equation \eqref{z1} (resp. \eqref{z2}) starting from $B'$ and ending at $C'$ (resp. $A'$)  on the $y$-axis of $w$-$y$ plane.
In other words,  $\widehat{ABC}$ is a closed orbit if and only if $A'$ and $C'$  coincide in $w$-$y$ plane. 
\begin{figure}[hpt]
	\centering
\subfigure[{ An orbit arc $\widehat{ABC}$ surrounding the origin }]{
		\includegraphics[width=0.45\textwidth]{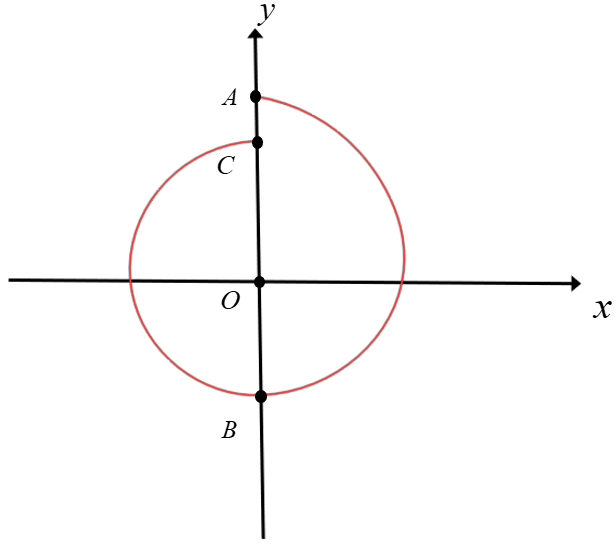}}
	\quad
\subfigure[{ The orbit arcs starting from $B'$
		for equations \eqref{z1} and \eqref{z2} }]{
		\includegraphics[width=0.46\textwidth]{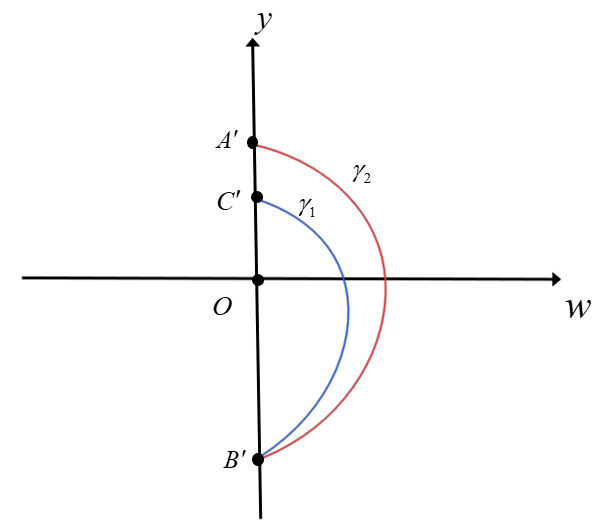}}
	\caption{The orbit arcs for showing the change of transformation \eqref{z} }
	\label{tu1}
\end{figure}

\begin{theorem} 
	\label{thm3}
 The set $\mathcal{G}$ is a closed-orbit set if and only if 
$$
 F_1(w)\equiv F_2(w)
$$
for $0\leq w\leq +\infty$.
\end{theorem}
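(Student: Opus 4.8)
The plan is to carry out everything in the transformed $w$-$y$ picture already set up above, reducing ``$\widehat{ABC}$ is closed'' to the coincidence of the two arcs $\gamma_1,\gamma_2$ issuing from $B'$. The first step is to remove the degeneracy of \eqref{z1}--\eqref{z2} on the axis $w=0$, where the factor $w^{-r}$ forces $dw/dy\to+\infty$. Passing to the variable $W:=w^{r+1}/(r+1)=G(x)$ and using $w^{r}\,dw=dW$ turns \eqref{z1} and \eqref{z2} into the \emph{regular} scalar equations
\[
\frac{dW}{dy}=\Phi_i(W)-y,\qquad \Phi_i(W):=F_i\big(((r+1)W)^{1/(r+1)}\big),\quad i=1,2 .
\]
Since $w\mapsto W$ is a homeomorphism of $[0,\infty)$ onto itself, the identity $F_1\equiv F_2$ is equivalent to $\Phi_1\equiv\Phi_2$, so it suffices to prove the theorem with $\Phi_i$ in place of $F_i$. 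At the base point $B'=(0,y_B)$ with $y_B<0$ the right-hand side equals $\Phi_i(0)-y_B=-y_B>0$; hence $W$ increases vertically there and, writing $y$ as a function of $W$, $dy/dW=1/(\Phi_i(W)-y)$ has a right-hand side that is Lipschitz in $y$ (the denominator is bounded away from $0$) and continuous in $W$, so the solution issuing from $B'$ is unique even though $\Phi_i$ need only be H\"older at $W=0$.

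For sufficiency ($\Leftarrow$), assume $\Phi_1\equiv\Phi_2=:\Phi$. Then $\gamma_1$ and $\gamma_2$ are both solution arcs of the single equation $dW/dy=\Phi(W)-y$ leaving $B'$ into $W>0$ with common initial slope $-y_B$. By the uniqueness just noted they coincide near $B'$; on $W>0$ the field is analytic, and one continues through the turning point $dW/dy=0$ by reparametrising with $y$, where the field is again smooth. Thus $\gamma_1$ and $\gamma_2$ are literally the \emph{same} arc, which meets $w=0$ only at its two endpoints; as one endpoint is $B'$, the other endpoints $A'$ and $C'$ coincide. Hence $A'=C'$, every $\widehat{ABC}\in\mathcal{G}$ closes up, and $\mathcal{G}$ is a closed-orbit set.

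For necessity ($\Rightarrow$) I would argue by contraposition together with a comparison of the two flows. Suppose $\Phi_1\not\equiv\Phi_2$. As each $\Phi_i$ is analytic on $(0,\infty)$, their difference has isolated zeros, so with $W_0:=\inf\{W>0:\Phi_1(W)\neq\Phi_2(W)\}$ one has $\Phi_1\equiv\Phi_2$ on $[0,W_0]$ while one of them, say $\Phi_2$, strictly dominates on an interval $(W_0,W_0+\eta)$. Because $G(x)\to+\infty$ as $|x|\to\infty$, there are arcs in $\mathcal{G}$ whose maximal $W$-value lies in $(W_0,W_0+\eta)$; fix one and its base point $B'=(0,y_B)$. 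The two arcs coincide while $W\le W_0$, and for $W>W_0$ the ordering $\Phi_2(W)-y\ge \Phi_1(W)-y$ of the two scalar fields, via the comparison theorem, gives $W_2(y)\ge W_1(y)$ for $y\ge y_B$, with a strictly positive gap propagated (Gr\"onwall) through the remainder of the arc. Consequently the side with the strictly larger $\Phi$ returns to $w=0$ at a strictly larger height, so $A'\neq C'$ and this arc fails to close; hence $\mathcal{G}$ is not a closed-orbit set, which is the contrapositive.

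The main obstacle is the degeneracy of \eqref{z1}--\eqref{z2} on $w=0$: the $w^{-r}$ factor gives a vertical tangent there, and in the $W$-variable the transformed field $\Phi_i(W)-y$ is in general only H\"older (not Lipschitz) in $W$ at $W=0$, so neither a naive uniqueness argument nor a naive comparison applies directly. Passing to $W=G(x)$ and, at the boundary, to the $dy/dW$ form is precisely what restores a uniquely solvable problem and legitimises both the coincidence argument in the sufficiency and the monotone comparison of return heights in the necessity. The remaining points demanding care are the propagation of the strict comparison inequality through the turning points and the verification that arcs of every size, reaching every level $W>0$, actually occur in $\mathcal{G}$.
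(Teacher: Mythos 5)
Your proposal shares the paper's skeleton (uniqueness of solutions for sufficiency; analyticity, isolated zeros, and the comparison theorem for necessity), but it closes the necessity by a genuinely different mechanism, and one step of that mechanism is not yet a proof. What you add is valuable: passing to $W=G(x)$ so that \eqref{z1}--\eqref{z2} become $dW/dy=\Phi_i(W)-y$, and flipping to $dy/dW=1/(\Phi_i(W)-y)$ at the axis, gives an honest Picard--Lindel\"of uniqueness statement at points of $w=0$. The paper silently invokes exactly such uniqueness twice (at $B'$ in the sufficiency, and at $A'$ in the necessity) even though the right-hand side of \eqref{z1}--\eqref{z2} is singular on $w=0$, so your device is what makes that rigorous. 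Where you diverge is the conclusion of the necessity: the paper never propagates a strict gap to the axis. It picks the arc whose rightmost point $P=(w^*,F_2(w^*))$ lies on the isocline of \eqref{z2} with $w^*$ inside the interval of strict ordering, shows by weak comparison that the turning point $Q$ of $\gamma_1$ has $w_Q<w^*$, then introduces a \emph{third} integral curve $\gamma'$ of \eqref{z1} starting at $A'$, shows $\gamma'$ must cross the line $w=w^*$, hence $\gamma'\neq\gamma_1$, and concludes $C'\neq A'$ from uniqueness at $A'$. That trick needs no strict inequality anywhere near the axis; your route instead compares the two return heights directly.

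The step of yours that fails as written is the claim that the strictly positive gap $W_2(y)-W_1(y)$ is ``propagated (Gr\"onwall) through the remainder of the arc,'' so that $W_2>0$ at the height where $W_1$ returns to $0$. Gr\"onwall in the $y$-parametrization needs a Lipschitz bound for $\Phi_1$ on the region swept by both curves, and $\Phi_1$ is in general only H\"older at $W=0$ (e.g.\ $\Phi_1(W)\sim cW^{(s+1)/(r+1)}$ with $s<r$), so the Lipschitz constant blows up exactly where you want the conclusion; the turning points you flag as the danger are in fact harmless, since parametrized by $y$ they are interior points with $W$ bounded away from $0$. The repair uses the tool you already set up: on the descending branches write both arcs as graphs $y_i(W)$ solving $dy/dW=1/(\Phi_i(W)-y_i)$, whose right-hand side is Lipschitz in $y$ near the axis because the denominators are bounded away from $0$ there. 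If the endpoints coincided, then $u(W):=y_2(W)-y_1(W)$ would satisfy $u\geq 0$, $u(0)=0$ and $u'\leq u/d$ for some $d>0$ (using $\Phi_1\leq\Phi_2$ and the sign of the denominators), whence $u\equiv 0$ near $W=0$ by Gr\"onwall, contradicting the strict ordering already established in the interior. Separately, your existence claim for arcs whose maximal $W$-level lies in $(W_0,W_0+\eta)$ does not follow from ``$G\to\infty$'' alone: you should take the arc through the point $(x,F(x))$, $x>0$, with $G(x)=W^*\in(W_0,W_0+\eta)$, so its right-hand piece tops out exactly at $W^*$, and then a comparison bootstrap shows its left-hand piece stays in $[0,W^*]$ and returns to the axis, so the arc really belongs to $\mathcal{G}$ --- this is precisely what the paper's choice of $P$ accomplishes.
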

\begin{proof}
Our proof is clearly divided into the following two steps.

{\noindent{\bf Step 1: 
		Sufficiency.}}
When $F_1(w)\equiv F_2(w)$, it is obvious that equations \eqref{z1} and \eqref{z2} are same. Thus,  $\gamma_1$ and $\gamma_2$, which are integral curves  of equations \eqref{z1} and \eqref{z2} starting from $B'$ respectively,  coincide by  the  uniqueness of solutions. Consequently, $A'$ and $C'$, which are respectively the intersections of $\gamma_1$ and $y$-axis, $\gamma_2$ and $y$-axis, coincide. In other words,   $\widehat{ABC}$ is a closed orbit.
 
{\noindent{\bf Step 2:		Necessity.}}
A straight calculation shows that  
	$$ \frac{dF_i(w)}{dw}=f(x(w))x_i'(w),\quad i=1,2.$$
 According to the definition of $x_i(w)$ $(i=1,2)$, we can obtain that $x_i'(w)$  $(i=1,2)$ are analytic because $g$ is polynomial. Furtherly, we have that $dF_i(w)/dw$ $(i=1,2)$ are analytic. Thus, $F_1(w)$ and $F_2(w)$  are analytic functions.
Assume that $F_1(w)\not\equiv F_2(w)$.
 Thus, we can obtain that $\mu\{w|F_1(w)=F_2(w)\}=0$ since  $F_1(w)$ and $F_2(w)$ are both analytic, where $\mu$ is the measure function. Otherwise,  there exist two  values $w_1$, $w_2$ and a small constant $\varepsilon$ such that $F_1(w)- F_2(w)\equiv0$ for $w\in[w_1,w_2]$, but $F_1(w) -F_2(w)\neq0$ for $w\in (w_1-\varepsilon, w_1)$. Then, we can easily check that there exists $k$ such that either the $k$-th order left derivative of $F_1(w)-F_2(w)$ 
  	at $w_1$  is not zero and $k$-th order right derivative of $F_1(w)-F_2(w)$ 
  	at $w_1$  is zero, which contradicts that $F_1(w)-F_2(w)$ is analytic. In other words, all values satisfying $F_1(w)= F_2(w)$ are isolated when $F_1(w)\not\equiv F_2(w)$. 
  	
  	Assume that $w_3$ is the smallest value such that $F_1(w_3)= F_2(w_3)$ if it exists.  Then, either   $F_1(w)>F_2(w)$ or $F_1(w)<F_2(w)$ holds  for all $w\in(0,w_3)$.
 We only need to consider $F_1(w)<F_2(w)$ for $w\in(0,w_3)$.
 Otherwise,
when the other case $F_1(w)>F_2(w)$ for $w\in(0,w_3)$ holds, we only need to apply a tranformation $(y,t)\to(-y,-t)$ for system \eqref{2}. 
 
 	\begin{figure}[hpt]
 	\centering
 	{ 			\includegraphics[width=0.75\textwidth]{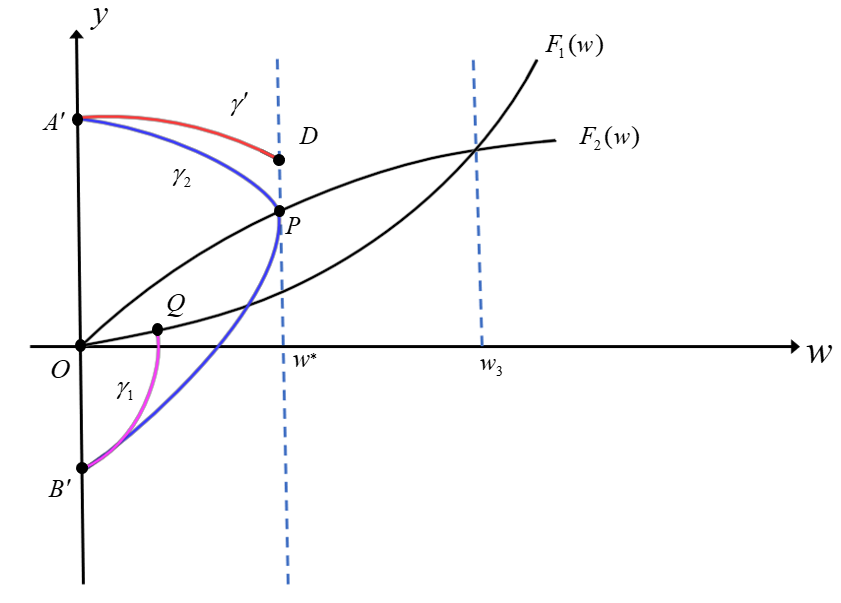}}
 	\caption{$\widehat{ABC}$ is not a closed orbit. }
 	\label{tu2}
 \end{figure}
 
 Choose a point $P:(w^*, F_2(w^*))$ on the curve $y=F_2(w)$  such that $w^*\in(0,w_3)$. We claim that the orbit arc $\widehat{ABC}$  crossing $P$ of system \eqref{2} is not a closed orbit.  Notice that
$\gamma_1$ and $\gamma_2$   are shown  in  
Figure \ref{tu2}, where
$Q:(w_Q,F_2(w_Q))$ is on $y=F_1(w)$. By $F_1(w)<F_2(w)$,
applying  the comparison theorem (see  \cite[Theorem 6.1 of Chapter 1]{Hale}) to  \eqref{z1} and \eqref{z2}, the integral curve $\widehat{B'Q}$ of \eqref{z1} lies on the left-hand side of the integral curve $\widehat{B'P}$ of \eqref{z2}. Thus, $w_Q<w^*$.

On the other hand,  
we define $\gamma'$ to be the integral curve of equation \eqref{z1} starting from $A'$, as shown in  
 Figure \ref{tu2}.   Using the comparison theorem again to  \eqref{z1} and \eqref{z2}, we can obtain that the integral curve $\gamma'\bigcap\{(w,y)|w\in(0,w_3)\}$ 
of \eqref{z1} lies on the right-hand side of the integral curve $\widehat{A'P}$ of \eqref{z2}. Thus, $\gamma'$ must intersect the line $w=w^*$ at $D: (w^*, y_D)$.
 Moreover,  $Q$ is the right-most point on $\gamma_1$ since $y = F_1(w)$ is the vertical isoclinic of equation \eqref{z1}. Therefore, $D$ does not lie on 
 $\gamma_1$ because of $w_Q<w*$, which means that  $\gamma_1$ and $\gamma'$ do not coincide. Consequently, by uniqueness of solutions,  the intersection point of $\gamma_1$
 and $y$-axis cannot be $A'$, i.e.,
 $A'$ and $C'$ do not coincide. Thus, the assertion is proven and the  sufficiency is done. The proof is finished.
\end{proof}

\section{Proofs of main results}
\label{main}
This section is to give the proofs of {our} main results.

\subsection{Proofs of Theorems \ref{thm1} and \ref{thm2}}
\label{sub3.1}
In this subsection, we will give the proofs of Theorems \ref{thm1} and \ref{thm2}. Before the proofs begin,  some lemmas are presented to show the necessity of system \eqref{1} with a global center. 
Notice that if system \eqref{1} has a global center at the origin, there is a unique equilibrium $O:(0,0)$ at finity. 

The first lemma is characterized to obtain  $O$ is the  unique equilibrium. Moreover, we can  roughly determine the qualitative property of $O$ by this lemma.

\begin{lemma}
	\label{lem5}
	System \eqref{1} has a unique equilibrium $O:(0,0)$ which is an anti-saddle if and only if the statement  {\bf(\romannumeral1)} of {\rm Theorem \ref{thm1}}
	holds.
\end{lemma}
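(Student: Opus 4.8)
The plan is to reduce the whole statement to a single computation of the Poincar\'e index of the origin, exploiting the Li\'enard structure. First I would locate all finite equilibria of \eqref{1}: since $\dot{x}=y$, every equilibrium lies on $\{y=0\}$, and there $\dot{y}=-g(x)$, so the equilibria are exactly the points $(x_0,0)$ with $g(x_0)=0$. Hence $O=(0,0)$ is the unique finite equilibrium if and only if $g(0)=0$ and $g$ has no nonzero real root. Recalling that an \emph{anti-saddle} means an isolated equilibrium of index $+1$ (a node, focus, or center, as opposed to a saddle, which has index $-1$), the lemma becomes the assertion that the pair of conditions [$g$ vanishes only at $0$] and [$\operatorname{ind}(O)=+1$] together hold if and only if $xg(x)>0$ for all $x\neq 0$.

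The key reduction is that the index of $O$ does not see $f$. Indeed, replacing $f$ by $sf$ with $s\in[0,1]$ leaves the equilibrium set $\{y=0,\ g(x)=0\}$ unchanged, so on a small disc about $O$ (small enough to exclude the other, isolated, roots of the polynomial $g$) the origin stays the only equilibrium along the whole homotopy. By homotopy invariance of the index, $\operatorname{ind}(O)$ for \eqref{1} then equals its value for the Hamiltonian system $\dot{x}=y$, $\dot{y}=-g(x)$, whose Hamiltonian is $H(x,y)=\tfrac12 y^2+G(x)$ with $G(x)=\int_0^x g(\xi)\,d\xi$. This also sidesteps the degenerate Jacobian $\begin{pmatrix}0&1\\-g'(0)&-f(0)\end{pmatrix}$. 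Since the Hamiltonian field $(y,-g(x))$ is the gradient field $\nabla H=(g(x),y)$ rotated by the constant angle $-\pi/2$, and a constant rotation preserves the index, $\operatorname{ind}(O)$ equals the index of $\nabla H$ at its critical point $O$.

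It then remains to read off this index from the local behaviour $g(x)\sim a_r x^r$ near $0$. I would split into sign patterns: if $r$ is odd and $a_r>0$ then $G$ has a strict local minimum at $0$, so $H$ does too, the nearby level sets of $H$ are closed curves, and $\operatorname{ind}(O)=+1$; if $r$ is odd and $a_r<0$ then $O$ is a saddle critical point of $H$ and $\operatorname{ind}(O)=-1$; and if $r$ is even then $g$ keeps one sign on each side of $0$, the first component of $\nabla H\sim(a_r x^r,y)$ never changes sign on a small circle, forcing total rotation $0$, so $\operatorname{ind}(O)=0$. Thus $\operatorname{ind}(O)=+1$ precisely when $r$ is odd and $a_r>0$, i.e.\ precisely when $g$ has the sign of $x$ near $0$. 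Combining this with the uniqueness requirement that $g$ have no nonzero root, a short continuity argument (the sign of $g$ is constant on each of $(0,\infty)$ and $(-\infty,0)$, and near $0$ it agrees with the sign of $x$) upgrades the local sign condition to the global statement $xg(x)>0$ for all $x\neq 0$; conversely, condition {\bf(\romannumeral1)} immediately yields both uniqueness and the correct local sign, closing the equivalence.

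The main obstacle is exactly the degenerate case $r\ge 2$, where $g'(0)=0$ renders the linear part nilpotent (or identically zero) and eigenvalues give no information about the index; this is why I route the argument through homotopy invariance and the gradient/Hamiltonian correspondence rather than through linearization. The only remaining care is bookkeeping: verifying that the homotopy $f\mapsto sf$ genuinely keeps $O$ isolated (immediate here, since the equilibria are independent of $s$), and carrying out the explicit winding-number computation in the even-$r$ case via the model $\nabla H=(a_r x^r,y)$.
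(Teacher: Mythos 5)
Your proof is correct, and while it shares the paper's overall skeleton---equilibria of \eqref{1} lie on $y=0$ at the zeros of $g$, and the dichotomy is decided by the Poincar\'e index of $O$---your method for the key index computation is genuinely different. The paper simply cites Theorems 4.1 and 4.2 of \cite{ZDHD} to assert that the index of $O$ equals $1$ when $xg(x)>0$, equals $0$ when $g$ has a fixed sign, and equals $-1$ when $xg(x)<0$, and then runs the same exhaustion over the four sign patterns of $g$ that you perform; the content of the index facts is left entirely to the reference. You instead prove them from scratch: the homotopy $f\mapsto sf$ leaves the equilibrium set $\{y=0,\ g(x)=0\}$ unchanged, so by homotopy invariance the index of \eqref{1} at $O$ equals that of the Hamiltonian field $(y,-g(x))$, which is the rotation by $-\pi/2$ of $\nabla H$ for $H(x,y)=\tfrac12y^2+G(x)$, and the index of $\nabla H$ is read off from the parity of $r$ and the sign of $a_r$ (your even-$r$ argument is sound: on a small circle the vector $(g(x),y)$ omits one fixed direction, hence has winding number $0$). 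This buys a self-contained argument and makes explicit a fact the paper leaves implicit in its citation, namely that the damping term $f(x)y$ is invisible to the index. One caveat, which you share with the paper rather than fall behind it: both proofs identify ``anti-saddle'' with ``equilibrium of index $+1$.'' If anti-saddle is read in the stricter sense of node, focus, or center, then index $+1$ alone does not suffice (Bendixson's formula allows index $+1$ with equal numbers of elliptic and hyperbolic sectors), and in the paper's architecture that finer conclusion is only supplied later by Lemma \ref{lem6}; since the paper's own proof of Lemma \ref{lem5} makes exactly the same identification, you are on equal footing there.
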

\begin{proof}
	On the one hand, if the statement  {\bf(\romannumeral1)} of {\rm Theorem \ref{thm1}}
	holds,
	it is easy to check that $O$ is the  unique equilibrium. By \cite[Theorems 4.1 and 4.2]{ZDHD}, the index of $O$ is 1, which implies that $O$ is an anti-saddle.
	On the other hand, if system \eqref{1} has a unique equilibrium at origin,  $g(x)$ for $x\ne0$ has the following four cases:
	\begin{eqnarray*}
 (C1): g(x)>0; \ (C2): g(x)<0; \ (C3): xg(x)<0; \  (C4): xg(x)>0.
	 \end{eqnarray*}
	Further, by \cite[Theorems 4.1 and 4.2]{ZDHD}, we calculate that the the index of $O$ is $0$ for cases $(C1, C2)$  and $-1$ for case $(C3)$, which contradicts $O$ is an anti-saddle, whose index is $1$.   Thus, statement  {\bf(\romannumeral1)} of {\rm Theorem \ref{thm1}}
must	hold.  The proof is done.

\end{proof}

Secondly, we apply classical  qualitative theory, which can be seen in \cite{Andreev}, \cite[Chapter 3]{DLA},  \cite[Chapter 2]{ZDHD} and other monographs and papers, to state a lemma of the  analysis of  $O$ of system \eqref{1}.

\begin{lemma}
	\label{lem6}
	$O$ is either a linear type center or focus if and only if the statement  {\bf(\romannumeral2)} of {\rm Theorem \ref{thm1}}
	holds, a  nilpotent type center or focus if and only if the statement {\bf(\romannumeral2$^*$)} of {\rm Theorem \ref{thm2}}
	holds.
\end{lemma}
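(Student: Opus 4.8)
The plan is to treat the two equivalences separately: the linear-type statement is settled entirely by the linearization at $O$, whereas the nilpotent-type statement requires the classification of a degenerate singular point via Andreev's theorem.

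\emph{Linear type.} By the definition recalled in the introduction, $O$ is a linear type center or focus exactly when the Jacobian at $O$ (displayed in Section 1) has purely imaginary eigenvalues, that is $\operatorname{tr}J=-f(0)=0$ and $\det J=g'(0)>0$. First I would record the elementary computation: since $g(x)=\sum_{i=r}^{m}a_ix^i$ with $a_r\ne0$, one has $g'(0)=a_1$ if $r=1$ and $g'(0)=0$ if $r>1$; likewise $f(0)=b_0$ if $s=0$ and $f(0)=0$ if $s\ge1$. Hence $f(0)=0\iff s\ge1$ and $g'(0)>0\iff r=1,\ a_r>0$, which is precisely statement {\bf(\romannumeral2)}. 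Whether the monodromic point is a genuine center or only a fine focus is immaterial here, as both share the same (purely imaginary) linearization.

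\emph{Nilpotent type.} Here $J$ must be nilpotent and not identically zero; as its $(1,2)$ entry equals $1$, the latter is automatic, so the requirement is $\operatorname{tr}J=\det J=0$, i.e.\ $f(0)=0$ and $g'(0)=0$, equivalently $s\ge1$ and $r\ge2$. Next I would put the system in the form adapted to Andreev's theorem: near $O$,
\[
\dot x=y,\qquad \dot y=-a_rx^{r}\bigl(1+o(1)\bigr)-b_sx^{s}y\bigl(1+o(1)\bigr),
\]
so that in Andreev's notation the pure-$x$ term has order $k=r$ with coefficient $-a_r$ and the $xy$ term has order $n=s$ with coefficient $-b_s$. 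The decisive step is to read off from Andreev's classification precisely when $O$ is monodromic. Monodromicity forces $k=r$ odd and $-a_r<0$, i.e.\ $a_r>0$ (in agreement with condition {\bf(\romannumeral1)}, which already yields $r$ odd and $a_r>0$); and then $O$ is a center or focus if and only if either $n>(k-1)/2$, i.e.\ $r<2s+1$, or $n=(k-1)/2$, i.e.\ $r=2s+1$, together with the boundary condition $(-b_s)^2+4(s+1)(-a_r)<0$. Substituting $s=(r-1)/2$ in the boundary case gives $4(s+1)=2(r+1)$, so this condition becomes $b_s^2-2(r+1)a_r<0$, matching statement {\bf(\romannumeral2$^*$)}; the constraint $r\ge2$ with $r$ odd upgrades to $r\ge3$, i.e.\ $2<r$. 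Conversely, in each excluded case ($r$ even; or $r>2s+1$; or $r=2s+1$ with $b_s^2-2(r+1)a_r\ge0$) Andreev's theorem produces a cusp, node, saddle or saddle-node, none of which is a center or focus, so statement {\bf(\romannumeral2$^*$)} is also necessary.

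I expect the main obstacle to be the faithful transcription of Andreev's theorem: tracking which order ($r$ versus $2s+1$) and which sign govern each non-monodromic subcase, and checking that the boundary discriminant in Andreev's formulation coincides with $b_s^2-2(r+1)a_r$ after the identification $k=r$, $n=s$ and the sign change coming from $\dot y=-g(x)-f(x)y$. Once this correspondence of constants is pinned down, both equivalences follow by direct comparison with statements {\bf(\romannumeral2)} and {\bf(\romannumeral2$^*$)}.
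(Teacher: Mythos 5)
Your proposal is correct and follows essentially the same route as the paper: the linear-type equivalence is read off from the Jacobian at $O$ (trace zero and positive determinant, i.e.\ $s\ge 1$, $r=1$, $a_r>0$), and the nilpotent-type equivalence is obtained from the classical nilpotent classification theorem (Andreev's theorem, which the paper invokes as Theorem 7.2 of Chapter 2 of Zhang et al.). The only difference is one of detail: you carry out explicitly the transcription of constants ($k=r$, $n=s$, and $4(s+1)=2(r+1)$ in the boundary case $r=2s+1$) that the paper leaves implicit in its citation.
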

\begin{proof}
	We calculate that the Jacobian matrix at $O$ as follows 
	\begin{equation*}
		\begin{aligned}
			J:=  \begin{pmatrix}
				0 & 1  \\
				-a_1 & -b_0
			\end{pmatrix}.
		\end{aligned}
	\end{equation*}
	The eigenvalues of $J$ are	$\lambda_1=\left(-b_0+\sqrt{b_0^2-4a_1}\right)\left/\right.2$ and $\lambda_2=\left(-b_0-\sqrt{b_0^2-4a_1}\right)\left/\right.2$. 
	When statement  {\bf(\romannumeral2)} of {\rm Theorem \ref{thm1}} holds, we can obtain that $a_1>0$ and $b_0=0$. 
	Thus,  $\lambda_1$ and $\lambda_2$ are a pair of purely imaginary eigenvalues.
By \cite[Theorem 5.1 of Chapter 2]{ZDHD}, $O$ is a linear  type center or focus.

	When statement  {\bf(\romannumeral2$^*$)} of {\rm Theorem \ref{thm2}} holds, $a_1=b_0=0$.  Thus, $\lambda_1=\lambda_2=0$.  However, notice that  not all the coefficients of the
	linear system are zero,  so  $O$ is a nilpotent equilibrium in this case. Furtherly we can obtain that $O$ is a nilpotent  type center or focus  by applying \cite[Theorem 7.2 of Chapter 2]{ZDHD}. We remark that statements  {\bf(\romannumeral2)} of {\rm Theorem \ref{thm1}} and   {\bf(\romannumeral2$^*$)} of {\rm Theorem \ref{thm2}} are sufficient and necessary by \cite[Theorem 5.1 and  Theorem 7.2 of Chapter 2]{ZDHD}.
\end{proof}

Thirdly, as shown in Figure \ref{tu3},  in the Poincar\'e disc, except for those orbits defined by $z=0$, there are no other orbits of system \eqref{1} connecting the equilibria at  infinity.  The following result   provides a criterion on this.

\begin{lemma}
	\label{lem7}
	There is no orbit of system \eqref{1} connecting equilibria at infinity in the Poincar\'{e} disc if and only if the statement  {\bf(\romannumeral3)} of {\rm Theorem \ref{thm1}}
	holds. Moreover, system \eqref{1} at infinity  has two equilibria $I_B^{\pm}$ on the $y$-axis and qualitative propertis of system \eqref{1} near infinity are  shown in {\rm Figure \ref{tu7}(f)}.
\end{lemma}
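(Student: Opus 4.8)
The plan is to pass to the Poincar\'e compactification of system \eqref{1} and classify its equilibria on the line at infinity. Writing $P=y$ and $Q=-g(x)-f(x)y$, condition \textbf{(\romannumeral3)} forces $m\ge 2n+1\ge n+1$, so $\deg g=m$ dominates $\deg(f\cdot y)=n+1$ and the field has degree $d=m$. First I would work in the chart $U_1$ where $x=1/v$, $y=u/v$: the degree-$m$ homogeneous part of $Q$ is $-a_mx^m$ while that of $P$ vanishes, so the equation determining the equilibria on the equator $v=0$ collapses to the nonvanishing constant $-a_m\neq0$. Hence no infinite equilibrium has finite slope, and \emph{all} infinite equilibria are the two endpoints $I_B^{\pm}$ of the $y$-axis (slope $u=\infty$), to be analysed in the chart $U_2$ ($x=u/v$, $y=1/v$) and its antipodal chart $V_2$.

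The origin of $U_2$ is highly degenerate, and the essential tool is a quasi-homogeneous blow-up reflecting the intrinsic Li\'enard scaling $y\sim|x|^{(m+1)/2}$; concretely I assign weights $(m-1,m+1)$ to $(u,v)$. Expanding the compactified field near $(0,0)$ produces, as principal part, the monomials $v^{m-1}$ and $a_mu^{m+1}$ in $\dot u$ together with $a_mu^mv$ in $\dot v$, whereas the contribution of $f$, carried by $b_nu^{n+1}v^{m-n-1}$, has quasi-degree exceeding the principal degree by exactly $m-(2n+1)$. This Newton-polygon bookkeeping is the crux: the $f$-term enters the principal part precisely when $m=2n+1$ and is otherwise negligible. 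Carrying out the directional blow-up $v=r^{m+1}$, $u=r^{m-1}\bar u$ and cancelling the common power of $r$, the equilibria on the exceptional divisor are the real roots of
\[
1+\tfrac{2a_m}{m+1}\,\bar u^{\,m+1}=0\ \ (m>2n+1),\qquad a_m\zeta^2+b_n\zeta+(n+1)=0,\ \zeta=\bar u^{\,n+1}\ \ (m=2n+1).
\]

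The equivalence now follows by deciding when these equations have no real solution. For $m>2n+1$ a root would require $\bar u^{\,m+1}=-(m+1)/(2a_m)$, which with $a_m>0$ is impossible exactly when $m$ is odd and solvable when $m$ is even, matching the first alternative of \textbf{(\romannumeral3)}. For $m=2n+1$ the quadratic in $\zeta$ has discriminant $b_n^2-4(n+1)a_m$, so there is no real characteristic direction iff $b_n^2<4(n+1)a_m$, i.e. $4(n+1)a_mb_n^{-2}>1$, the second alternative. Absence of an equilibrium on the divisor means no trajectory approaches $I_B^{\pm}$ tangentially from the interior; blowing down, each of $I_B^{\pm}$ then carries exactly two hyperbolic sectors whose separatrices lie along the equator, so the only orbits joining $I_B^{+}$ and $I_B^{-}$ are contained in $\{z=0\}$, which is the portrait of Figure \ref{tu7}(f). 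Conversely, when \textbf{(\romannumeral3)} fails a real root appears, and I would follow the corresponding separatrix back through the charts to exhibit an explicit orbit connecting the infinite equilibria.

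The step I expect to be the main obstacle is the blow-up itself, executed consistently across all four charts $U_1,V_1,U_2,V_2$ and across the sign sub-cases of $\bar u$ and of $v$ (the parity of $n+1$ and the sign pattern of the roots $\zeta$ must be tracked so that the directions missing in $U_2$ are recovered in $V_2$), together with the rigorous passage from ``no real root on the divisor'' to ``no orbit joining $I_B^{+}$ and $I_B^{-}$''. That last implication requires control of the angular monotonicity of the flow near infinity, for which the monotone $w$-transformation of Section \ref{pre} or an index/Lyapunov argument is convenient; and the borderline equality $b_n^2=4(n+1)a_m$ (a double root $\zeta$) has to be examined separately and shown to fall on the connecting side, so that the strict inequality in \textbf{(\romannumeral3)} is genuinely sharp.
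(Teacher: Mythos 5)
Your forward-direction computation is essentially sound and parallels what the paper does in its appendix: under \textbf{(\romannumeral3)} the degree of the compactified field is $m>n+1$, the only infinite equilibria are $I_B^{\pm}$, and your quasi-homogeneous bookkeeping (weights $(m-1,m+1)$, the $f$-term off the principal part by $m-(2n+1)$) correctly reproduces the two algebraic alternatives: no real characteristic direction iff $m$ odd, $a_m>0$, $m>2n+1$, or $m=2n+1$ with $b_n^2<4(n+1)a_m$. However, there are two genuine gaps. First, the lemma is an equivalence, and your framework is only set up for $m\geq 2n+1$. When \textbf{(\romannumeral3)} fails with $m<2n+1$, the $f$-term has quasi-degree \emph{lower} than your chosen principal part (your own exponent count gives $m-(2n+1)<0$), so the weights $(m-1,m+1)$ no longer isolate the correct Newton-polygon face; and when $m\leq n+1$ the compactified degree is $n+1$, so additional infinite equilibria appear on the $x$-axis (and, for $m=n+1$, on $y=\pm x$), contradicting your claim that all infinite equilibria are $I_B^{\pm}$. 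The paper's necessity argument rests on the complete case-by-case classification of behaviour at infinity (Table \ref{t}, after normalizing to system \eqref{m} and invoking \cite{DLI}), which exhibits a connecting orbit in every failing regime — degenerate nodes, elliptic sectors, saddle-nodes, etc.; your single sentence ``follow the corresponding separatrix back'' does not substitute for this.

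Second, even in the good cases, ``no real root of the slope equation'' does not yet yield the two-hyperbolic-sector picture of Figure \ref{tu7}(f). The equator directions $\theta=0,\pi$ are \emph{always} characteristic directions of the degenerate point $I_B^{\pm}$ (in the paper's notation $G_1(\theta)=-\sin^{2n+1}\theta$ vanishes exactly there), and since $H_1$ vanishes there too, the normal-sector criterion is inconclusive; one must still rule out orbits from the interior entering tangentially to the equator. This is precisely what the paper's appendix is for: successive Briot--Bouquet and quasi-homogeneous blow-ups reduce those directions to nondegenerate ones where \cite[Theorem 3.7 of Chapter 2]{ZDHD} applies, showing the unique orbits in the directions $\theta=0,\pi$ are the equator orbits themselves. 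You correctly flag this step as the main obstacle, but it is left unresolved, and it is the heart of the proof rather than a technicality.
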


\begin{proof}
	Without loss of generality, we assume that $b_n>0$ of system \eqref{1}. In fact, if $b_n<0$, we can get a new $b_n>0$ via the transformation $(x,y,t,b_n)\to (x,-y,-t,-b_n)$. 
	
	When  $m\ne2n+1$ is odd, 
	with a coordinate transformation
	$$
	(x,y,t)\rightarrow\left(\left|a_m\right|^{\frac{1}{2n+1-m}}b_n^{-\frac{2}{2n+1-m}}x, \left|a_m\right|^{\frac{n+1}{2n+1-m}}b_n^{-\frac{m+1}{2n+1-m}}y,  \left|a_m\right|^{-\frac{n}{2n+1-m}}b_n^{\frac{m-1}{2n+1-m}}t\right),
	$$
 system \eqref{1} can be rewritten as 
	\begin{equation}
		\label{m}
		\left\{\begin{aligned}
			\dot{x}=&y,
			\\
			\dot{y}=&-\left(\epsilon x^m+\sum_{i=r}^{m-1}\widehat{a}_i{x^i}\right)-y\left(x^n+\sum_{i=s}^{n-1}\widehat{b}_ix^i\right),
		\end{aligned}
		\right.
	\end{equation}
	where $\epsilon={\rm sign}(a_m)$. When $m\ne2n+1$ is even, by the following transformation
	$$
	(x,y,t)\rightarrow\left(a_m^{\frac{1}{2n+1-m}}b_n^{-\frac{2}{2n+1-m}}x, a_m^{\frac{n+1}{2n+1-m}}b_n^{-\frac{m+1}{2n+1-m}}y,  a_m^{-\frac{n}{2n+1-m}}b_n^{\frac{m-1}{2n+1-m}}t\right),
	$$
 system \eqref{1} can be also changed into system \eqref{m}  with $\epsilon=1$. 
 Finally, when $m=2n+1$,
	using a scaling
	$$
	(x,y)\to\left(b_n^{-\frac{1}{n}}x, b_n^{-\frac{1}{n}}y\right),
	$$ 
 we can change system \eqref{1} into system \eqref{m}, where $\epsilon =a_mb_n^{-2}\ne0$.

	For the sake of simplicity, we next only need to consider system \eqref{m}.  According to the result of \cite{DLI}, the dynamics at infinity of system \eqref{m} can be obtained directly, as shown in Table \ref{t}.
	$I_A^{\pm}$ are equilibria on the $x$-axis, $I_B^{\pm}$ are equilibria on the $y$-axis, $I_C^{\pm}$ are equilibria on the line $y=x$ and  $I_D^{\pm}$ are equilibria on the line $y=-x$ where $I_A^+$, $I_B^+$, $I_C^+$, $I_D^+$ lie in the upper half-plane
	and the other four equilibria lie in the lower one. The phase portraits in the Poincar\'{e} disc can be found in Figures \ref{tu4}-\ref{tu8},
	where Figure \ref{tu7}(f) and Figure \ref{tu8}(b) are same. We can obtain that, only in the cases 
	$$
	m=2n+1, \epsilon>(4n+4)^{-1}
	$$ 
	and 
	$$
	 m>2n+1,~ m ~ {\rm odd}, ~\epsilon=1,
	 $$
	   system\eqref{1} has no orbit connecting equilibria at infinity in the Poincar\'{e} disc.  The proof is completed.
\end{proof}

	\begin{remark}
	Notice that	there are no detailed calculations about equilibria at infinity in \cite{DLI}. For the completeness and readability of the article, we will show the detailed calculations about the qualitative properties of system \eqref{m} near infinity in the two cases $m=2n+1$, $\epsilon>(4n+4)^{-1}$ and $m>2n+1$, $m$ is odd, $\epsilon=1$ in the  appendix.	
	\end{remark}

\begin{table}[htp]\scriptsize
	\centering \scriptsize \doublerulesep 0.5pt
	\renewcommand\baselinestretch{1.7}\selectfont
	\centering
	\caption{Properties of  equilibria at infinity}
	\label{t}
	\resizebox{\textwidth}{!}{
		\begin{tabular}{|c|c|c|l|}
			\hline
			\multicolumn{3}{|c|}{possibilities of ($m$, $n$, $\epsilon$)} & \multicolumn{1}{c|}{types}\\
			\hline
			\multirow{13}{*}{$m<n+1$} &\multicolumn{2}{c|}{$m$, $n$ even} &\tabincell{l}{$I_A^{+}$: saddle, $I_A^{-}$: stable node;\\ $I_B^{\pm}$: unstable degenerate nodes (shown in Figure \ref{tu4}(a)) }\\
			\cline{2-4}
			&\multirow{2}{*}{$m$ odd, $n$ even} &$\epsilon=1$  &  $I_A^{\pm }$: saddles; $I_B^{\pm }$:  unstable degenerate nodes (shown in Figure \ref{tu4}(b))\\
			\cline{3-4}
			& &$\epsilon=-1$   &  $I_A^{\pm }$: stable  nodes; $I_B^{\pm }$: unstable degenerate nodes (shown in Figure \ref{tu4}(c))\\
			\cline{2-4}
			& \multicolumn{2}{c|}{$m$ even, $n$ odd} & \tabincell{l}{$I_A^{+}$:  saddle, $I_A^{-}$: unstable node; \\$S_{\delta}(I_B^+)$ consists of one hyperbolic sector, \\
				$S_{\delta}(I_B^-)$ consists of one  elliptic sector  (shown in Figure \ref{tu4}(d)) }  \\
			\cline{2-4}
			&\multirow{4}{*}{$m$, $n$ odd} &$\epsilon=1$  & \tabincell{l}{$I_A^{\pm}$: saddles ; \\$S_{\delta}(I_B^+)$ consists of one hyperbolic sector, \\
				$S_{\delta}(I_B^-)$ consists of one  elliptic sector (shown in Figure \ref{tu4}(e)) }\\
			\cline{3-4}
			& &$\epsilon=-1$   & \tabincell{l}{$I_A^{+}$: stable node; $I_A^{-}$: unstable node;\\
				$S_{\delta}(I_B^+)$ consists of one hyperbolic sector, \\
				$S_{\delta}(I_B^-)$ consists of one  elliptic sector (shown in Figure \ref{tu4}(f))} \\
			\hline
			\multirow{4}{*}{$m=n+1$}  &\multirow{2}{*}{$n$ even} &$\epsilon=1$  &$I_B^{\pm}$: unstable  degenerate nodes; $I_D^{\pm}$ saddles (shown in Figure \ref{tu5}(a))\\
			\cline{3-4}
			& &$\epsilon=-1$  & $I_B^{\pm }$: unstable degenerate  nodes; $I_C^{\pm }$ stable nodes (shown in Figure \ref{tu5}(b))\\
			\cline{2-4}
			&\multicolumn{2}{c|}{$n$ odd} &\tabincell{l}{$S_{\delta}(I_B^+)$ consists of one hyperbolic sector,
				$S_{\delta}(I_B^-)$ consists of one  elliptic sector; \\ $I_D^{+}$: saddle, $I_D^{-}$: unstable  node (shown in Figure \ref{tu5}(c))}\\
			\hline
			\multirow{11}{*}{$n+1<m<2n+1$} &\multicolumn{2}{c|}{$m$, $n$ even}  &\tabincell{l}{$I_B^+$ unstable degenerate node, \\$S_{\delta}(I_B^-)$ consists of one elliptic sector and one hyperbolic sector,   \\(shown in Figure \ref{tu6}(a))} \\
			\cline{2-4}
			&\multirow{2}{*}{$m$ odd, $n$ even} &$\epsilon=1$ &\tabincell{l}{$I_B^{\pm}$: saddle-nodes (shown in Figure \ref{tu6}(b))} \\
			\cline{3-4}
			& &$\epsilon=-1$ &\tabincell{l}{$S_{\delta}(I_B^{\pm})$ consist of one  elliptic sector respectively (shown in Figure \ref{tu6}(c))} \\
			\cline{2-4}
			&\multicolumn{2}{c|}{$m$ even, $n$ odd}  &\tabincell{l}{  $I_B^+$ unstable degenerate  node, \\$S_{\delta}(I_B^+)$ consists of one  elliptic sector and one hyperbolic sector, \\(shown in Figure \ref{tu6}(d))} \\
			\cline{2-4}
			&\multirow{2}{*}{$m$, $n$ odd} &$\epsilon=1$ &\tabincell{l}{$S_{\delta}(I_B^+)$ consists of one  hyperbolic sector, \\ $S_{\delta}(I_B^-)$ consists of one elliptic sector and  two  hyperbolic sectors  \\(shown in Figure \ref{tu6}(e))} \\
			\cline{3-4}
			&  & $\epsilon=-1$ &\tabincell{l}{$S_{\delta}(I_B^{\pm})$ consist of one  elliptic sector respectively (shown in Figure \ref{tu6}(f))}\\
			\hline
			\multirow{10}{*}{$m=2n+1$} &\multicolumn{2}{c|}{$\epsilon<0$} &\tabincell{l}{$S_{\delta}(I_B^{\pm})$ consist of one elliptic sector respectively (shown in Figure \ref{tu7}(a)) }\\
			\cline{2-4}
			&\multirow{2}{*}{$n$ even} &$0<\epsilon<(4n+4)^{-1}$ &\tabincell{l}{	$I_B^{\pm}$: saddle-nodes (shown in Figure \ref{tu7}(b))} \\
			\cline{3-4}
			& & $\epsilon=(4n+4)^{-1}$ &$I_B^{\pm}$: saddle-nodes (shown in Figure \ref{tu7}(c)) \\
			\cline{2-4}
			&\multirow{4}{*}{$n$ odd} &$0<\epsilon<(4n+4)^{-1}$&\tabincell{l}{	 $S_{\delta}(I_B^{+})$ consist of one  hyperbolic sector, \\
			$S_{\delta}(I_B^-)$ consists of one elliptic sector and  two  hyperbolic sectors\\
			 (shown in Figure \ref{tu7}(d))} \\
			\cline{3-4}
			& & $\epsilon=(4n+4)^{-1}$ & \tabincell{l}{	$S_{\delta}(I_B^+)$ consists of one  hyperbolic sector,  \\
			$S_{\delta}(I_B^-)$ consists of one elliptic sector and  two  hyperbolic sectors\\ (shown in Figure \ref{tu7}(e))}\\
			\cline{2-4}
			&\multicolumn{2}{c|}{$\epsilon>(4n+4)^{-1}$} &\tabincell{l}{$S_{\delta}(I_B^{\pm})$ consists of one  hyperbolic sector respectively (shown in Figure \ref{tu7}(f))} \\
			\hline
			\multirow{3}{*}{$m>2n+1$} &\multicolumn{2}{c|}{$m$ even} &\tabincell{l}{$I_B^+$: stable degenerate node,\\ $I_B^-$: unstable degenerate node (shown in Figure \ref{tu8}(a))} \\
			\cline{2-4}
			&\multirow{2}{*}{$m$ odd} &$\epsilon=1$ & $S_{\delta}(I_B^{\pm})$ consist of one  hyperbolic sector respectively (shown in Figure \ref{tu8}(b))\\
			\cline{3-4}
			& &$\epsilon=-1$ & \tabincell{l}{$S_{\delta}(I_B^{\pm})$ consist of one elliptic sector respectively  (shown in Figure \ref{tu8}(c)) }\\
			\hline
		\end{tabular}
	}
	
	\begin{center}
		Remark : $S_\delta(I_B^{+})$(resp. $S_\delta(I_B^{-})$ ) stands for any a small neighborhood of the equilibrium $I_B^+$ (resp. $I_B^-$).
	\end{center}
\end{table}
	\begin{figure}[hpt]
		\centering
		\subfigure[$m$, $n$ even]{
			\includegraphics[width=0.25\textwidth]{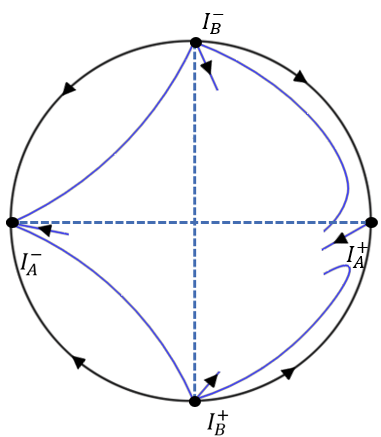}}
		\quad
		\subfigure[$m$ odd, $n$ even, $\epsilon=1$]{
			\includegraphics[width=0.25\textwidth]{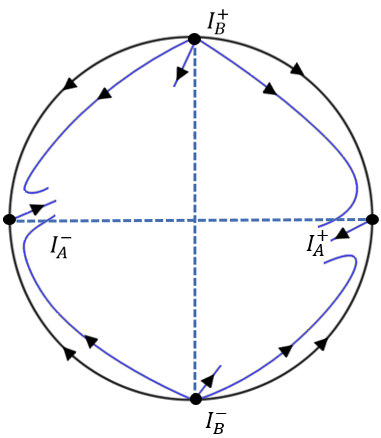}}
		\quad
		\subfigure[$m$ odd, $n$ even, $\epsilon=-1$]{
			\includegraphics[width=0.25\textwidth]{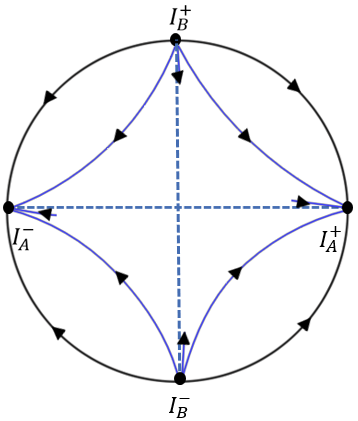}}

		\subfigure[$m$ even, $n$ odd]{
			\includegraphics[width=0.25\textwidth]{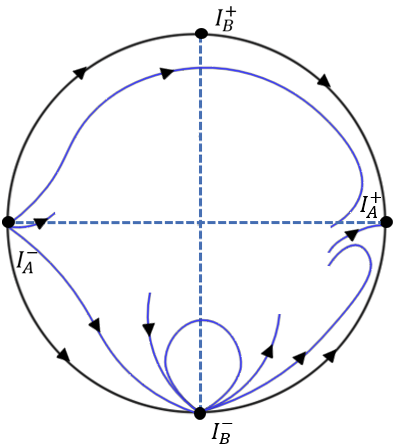}}
		\quad
		\subfigure[$m$, $n$ odd, $\epsilon=1$]{
			\includegraphics[width=0.25\textwidth]{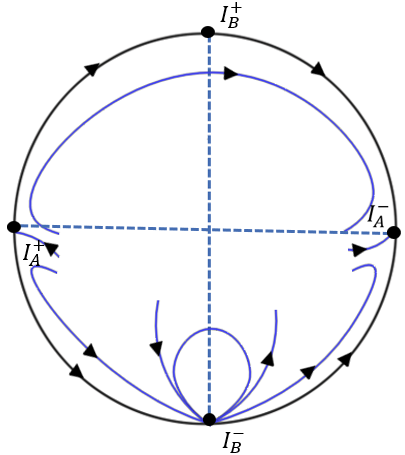}}
		\quad
		\subfigure[$m$, $n$ odd, $\epsilon=-1$]{
			\includegraphics[width=0.25\textwidth]{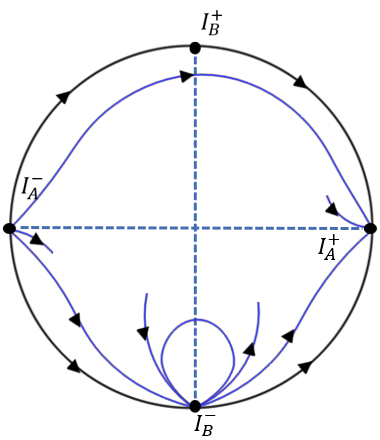}}
		\caption{ Behaviour near infinity in the Poincar\'{e} disc for $m<n+1$.}
		\label{tu4}
	\end{figure}
	
	\begin{figure}[hpt]
		
		\centering
		\subfigure[$n$ even, $\epsilon=1$]{
			\includegraphics[width=0.25\textwidth]{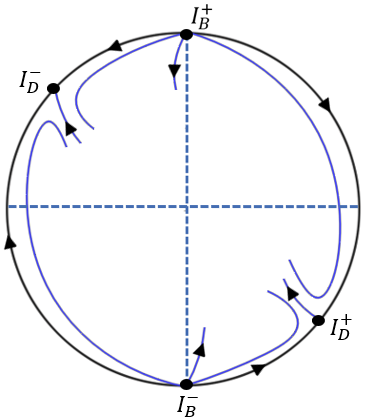}}\quad
		\subfigure[$n$ even, $\epsilon=-1$]{
			\includegraphics[width=0.25\textwidth]{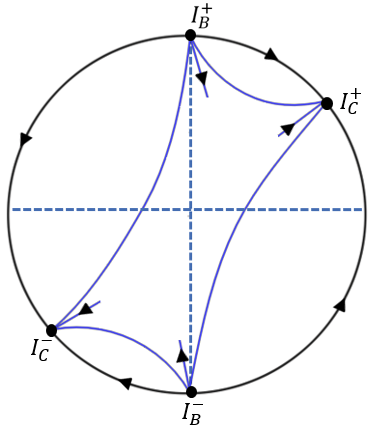}}\quad
		\subfigure[$n$ odd]{
			\includegraphics[width=0.26\textwidth]{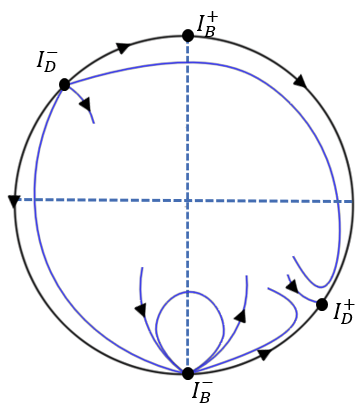}}\quad
		\caption{ Behaviour near infinity in the Poincar\'{e} disc for $m=n+1$.}
		\label{tu5}
	\end{figure}
	
		\begin{figure}[hpt]
			
			\centering
			\subfigure[$m$, $n$ even]{
				\includegraphics[width=0.25\textwidth]{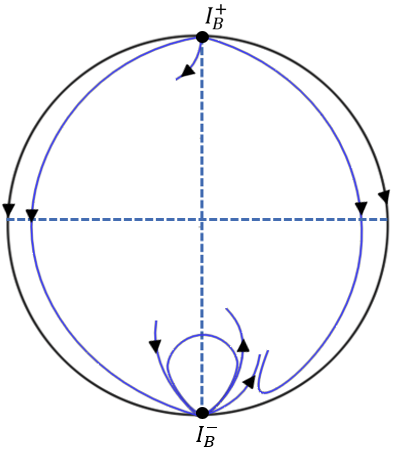}}\quad
			\subfigure[$m$ odd, $n$ even, $\epsilon=1$]{
				\includegraphics[width=0.25\textwidth]{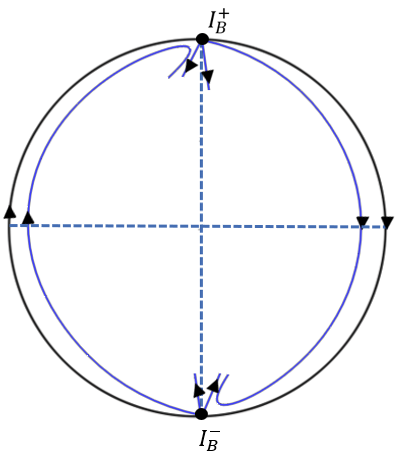}}\quad
			\subfigure[$m$ odd, $n$ even, $\epsilon=-1$]{
				\includegraphics[width=0.25\textwidth]{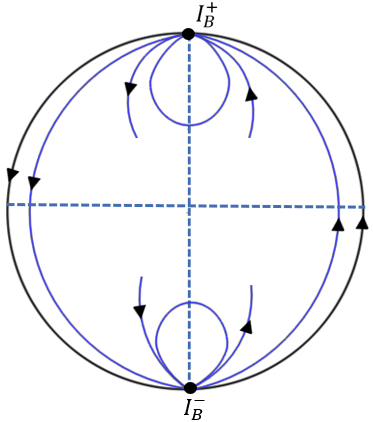}}
			
			\subfigure[$m$ even, $n$ odd]{
				\includegraphics[width=0.25\textwidth]{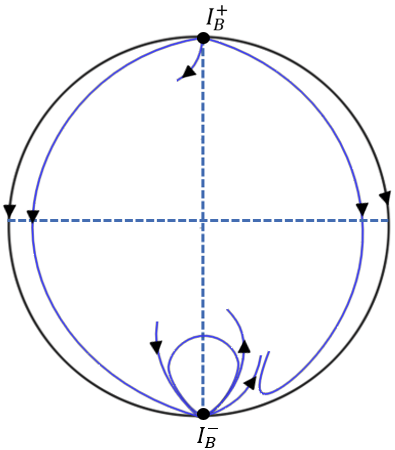}}
			\subfigure[$m$, $n$ odd, $\epsilon=1$]{
				\includegraphics[width=0.25\textwidth]{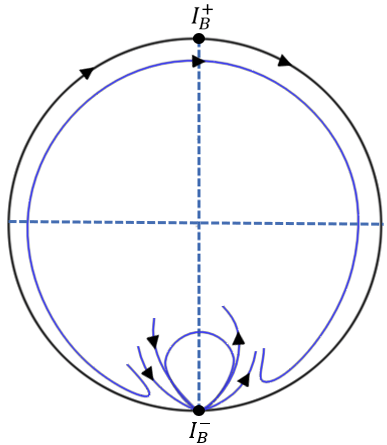}}\quad
			\subfigure[$m$, $n$ odd, $\epsilon=-1$]{
				\includegraphics[width=0.25\textwidth]{58}}
			\caption{ Behaviour near infinity in the Poincar\'{e} disc for $n+1<m<2n+1$.}
			\label{tu6}
		\end{figure}
	\begin{figure}[hpt]
		
		\centering
		\subfigure[$\epsilon<0$]{
			\includegraphics[width=0.25\textwidth]{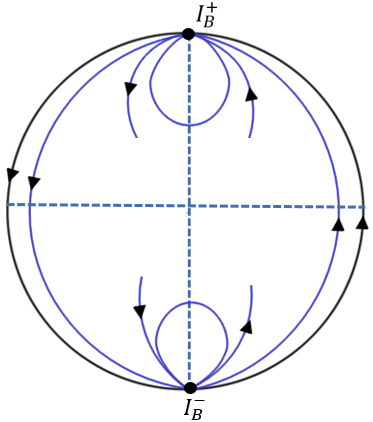}}\quad
		\subfigure[$n$ even, $0<\epsilon<(4n+4)^{-1}$]{
			\includegraphics[width=0.25\textwidth]{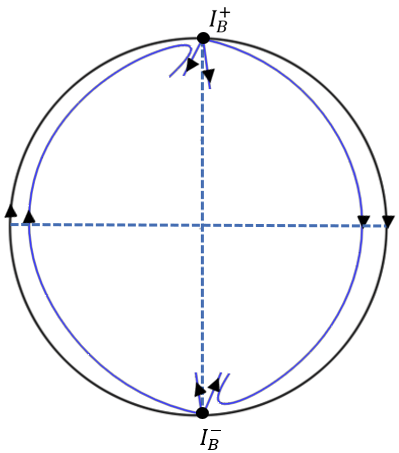}}\quad
			\subfigure[$n$ even, $\epsilon=(4n+4)^{-1}$]{
				\includegraphics[width=0.25\textwidth]{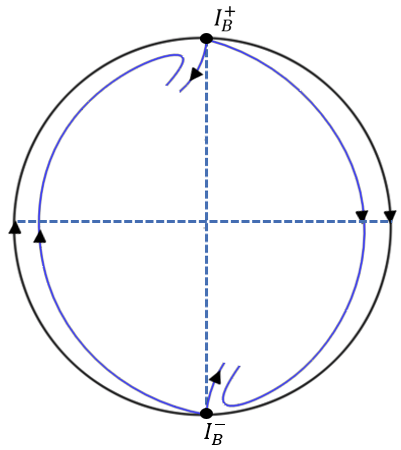}}
			
		\subfigure[$n$ odd, $0<\epsilon<(4n+4)^{-1}$]{
			\includegraphics[width=0.25\textwidth]{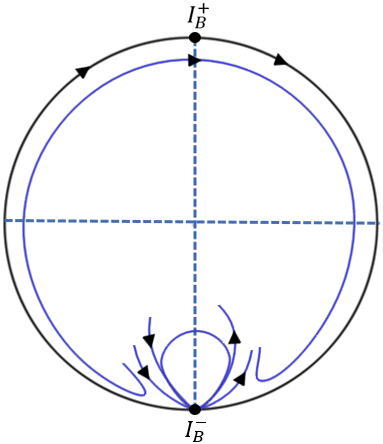}}\quad
		\subfigure[$n$ odd, $\epsilon=(4n+4)^{-1}$]{
			\includegraphics[width=0.25\textwidth]{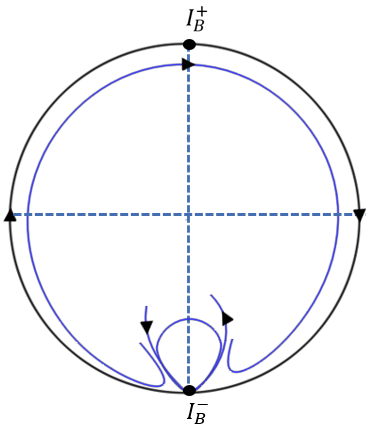}}\quad
		\subfigure[$\epsilon>(4n+4)^{-1}$]{
			\includegraphics[width=0.25\textwidth]{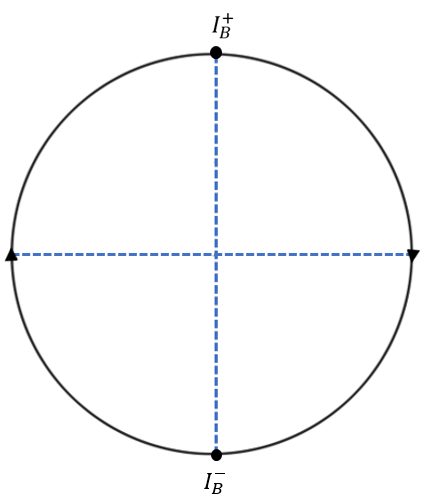}}
		\caption{ Behaviour near infinity in the Poincar\'{e} disc for $m=2n+1$.}
		\label{tu7}
	\end{figure}
	\begin{figure}[hpt]
		
		\centering
		\subfigure[$m$ even]{
			\includegraphics[width=0.25\textwidth]{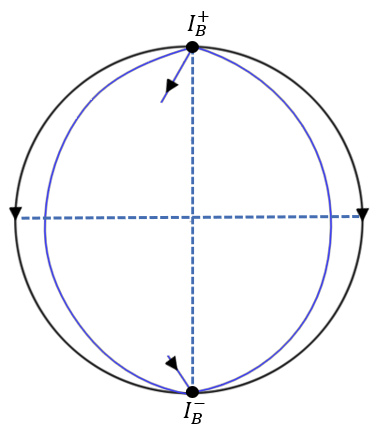}}\quad
		\subfigure[$m$ odd, $\epsilon=1$]{
			\includegraphics[width=0.25\textwidth]{66}}\quad
		\subfigure[$m$ odd, $\epsilon=-1$]{
			\includegraphics[width=0.25\textwidth]{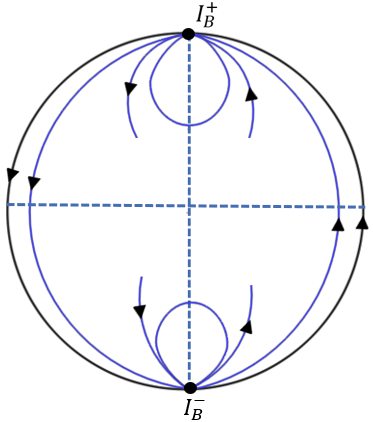}}
		\caption{ Behaviour near infinity in the Poincar\'{e} disc for $m>2n+1$.}
		\label{tu8}
	\end{figure}

{\noindent{\bf Proof of Theorem \ref{thm1}.}
	At first, we will give a proof that system \eqref{1} has a linear type global center at the origin when statements {\bf (\romannumeral1-\romannumeral4)}  hold.
	
	The proof is as follows.
 When statements {\bf (\romannumeral1-\romannumeral2)} of Theorem \ref{thm1}  hold, we can obtain that system \eqref{1} has a unique  equilibrium $O$, which is a linear type center or focus by Lemmas \ref{lem5} and \ref{lem6}.  
	Then, we consider system \eqref{2}, which is  globally topologically equivalent to  system  \eqref{1}. Choose any a point $P$ in $\mathbb{R}^2$. Without loss of generality, assume that $P$ is locate in the first quadrant.  let $\varphi(P,I^+)$  be the positive orbit of system \eqref{2} having the initial point $P$. According to the vector field $(y-F(x),-g(x))$ of system \eqref{2}, it is obvious that $\varphi(P,I^+)$ has to intersect the negative $y$-axis at a first time and then return to
	the positive $y$-axis. Because of arbitrariness of $P$, 
 all orbits of system \eqref{2} belong to orbit-set  $\mathcal{G}$.
As said in Section \ref{pre}, for any a value $w_0\geq0$, we can find a unique group $(x_1,x_2)\in\mathbb{R}^2$ satisfying $x_1<0<x_2$, $G(x_1)=G(x_2)$ and $w(x_1)=w(x_2)=w_0$.
When statement {\bf (\romannumeral4)}  holds,  $F(x_1)=F(x_2)$. Thus, $F(x_1(w_0))=F(x_2(w_0))$. Furtherly, considering definitions of $F_1(w)$ and $F_2(w)$,  we can obtain $F_1(w_0)=F_2(w_0)$. According to the arbitrariness of $w_0$, we can obtain that $F_1(w)\equiv F_2(w)$ for all $w\geq0$ if statement {\bf (\romannumeral4)}  holds.
Naturally,  by Theorem \ref{thm3}, all orbits of system \eqref{2} are closed orbits,  so is  system \eqref{1}. 
Finally by Lemma \ref{lem7}, the  qualitative properties of the equilibria  at infinity  are shown in Figure \ref{tu7}(f)  if  statement {\bf (\romannumeral3)}  holds.  

From what has been discussed above, $O$ is a linear type global center and we can furtherly obtain the global phase portraits of system \eqref{1}, as shown in Figure \ref{tu3}. Thus,   statements {\bf (\romannumeral1-\romannumeral4)}  are sufficient conditions for that system \eqref{1} has a linear type global center at the origin.
The next, we will show that  these statements are also necessary.

Since $O$ is the linear type global center of system \eqref{1}, it is clearly  evident that $O$ is the unique equilibrium which is a center. Thus, by Lemmas \ref{lem5} and \ref{lem6}, statements {\bf (\romannumeral1-\romannumeral2)}  must hold. Moreover, it is easy to check that all orbits are bounded, which implies that 
statement {\bf (\romannumeral3)} holds by Lemma 
\ref{lem7}.  Furtherly, since all orbits are closed orbits,  $F_1(w)\equiv F_2(w)$ for all $0\leq w\leq+\infty$ by applying Theorem \ref{thm3}. 
Return to \eqref{z}, the from of $w$, we can obtain that if $G(x_1)=G(x_2)$ for all $x_1<0<x_2$, $w(x_1)=w(x_2)$ holds. Thus, $F_1(w(x_1))=F_2(w(x_2))$, i.e. $F(x_1)=F(x_2)$. Statement {\bf (\romannumeral4)} holds.
The proof of Theorem \ref{thm1} is done.
$\hfill{} \Box$

{\noindent{\bf Proof of Theorem \ref{thm2}.}
We only need to prove that statement {\bf (\romannumeral2$^*$)} is a sufficient and  necessary condition for that  $O$ of system \eqref{1}  is a nilpotent type center or focus. In fact, it can be obtained by Lemma \ref{lem6}. 
Like the  proof of Theorem \ref{thm1} , we can prove similarly that the condition including   statements {\bf (\romannumeral1)},  {\bf (\romannumeral3)},  {\bf (\romannumeral4)} of Theorem \ref{thm1} and  {\bf (\romannumeral2$^*$)} of Theorem \ref{thm2} is  sufficient and  necessary for     a nilpotent type global center at the origin of system \eqref{1}.
$\hfill{} \Box$

\subsection{Proof of Corollary \ref{thm5}.}
\label{3.2}
Consider that $g(x)$ is odd.
Therefore,
it is clear that $G(x)$ is even. 
Furthermore,  $G(x)$ is strictly increasing for $x>0$ and strictly decreasing for $x<0$.
Thus, we have  $x_1=-x_2$ when $G(x_1)=G(x_2)$ for all $x_1<0<x_2$. 

When $f(x)$ is odd,
we first prove that statement {\bf (\romannumeral4)} of Theorem \ref{thm1}  holds.
Since $f(x)$ is odd,
it is clear that $F(x)$ is even.  Thus,
we have $F(x_1)=F(-x_1)$ for all $x_1<0$.
In other words, statement {\bf (\romannumeral4)} of Theorem \ref{thm1} holds. 

Next, when statement {\bf (\romannumeral4)} of Theorem \ref{thm1}  holds, 
we need to prove that $f(x)$ is odd.
Then, it is obvious that $F(x_1)=F(-x_1)$ for all $x_1<0$
since $F(x_1)=F(-x_1)$ for all $x_1<0$.
Thus, $F(x)$ is even. In other words, $f(x)$ is odd.

In concluions, the condition that $f(x)$ is odd is equivalent to {\bf (\romannumeral4)} of Theorem \ref{thm1}. 
Thus, by Theorem \ref{thm1}, we can obtain the 
conclusion directly.
The proof is finished.
$\hfill{} \Box$

\subsection{Proof of Theorem \ref{thm4}}
\label{sec2}

	Considering quintic Li\'{e}nard system \eqref{1}, we can get $m=5$, $n=4$, or $m=5$, $n<4$, or $m<5$, $n=4$ immediately.
By the statement {\bf (\romannumeral3)} of Theorem \ref{thm1}, $m\geq 2n+1$. If $n=4$, we have $m\geq 9$,
which contradicts $m\leq5$. On the other hand, it follows form {\bf (\romannumeral3)} that $m=5$ and $n\leq2$ furtherly.
Next, we claim that  the value of $n$ must be 1. Now, we consider $n=2$. However,
we can obtain $x_2=O(\left|x_1\right|)$ when $G(x_1)=G(x_2)$ with $\left|x_1\right|$ sufficiently large,  implying that $F(x_1) F(x_2)<0$ since $\deg(F)=3$. 
In other words, $F(x_1)=F(x_2)$ does not hold when $G(x_1)=G(x_2)$  for all $x_1<0<x_2$,
which contradicts statement {\bf (\romannumeral4)}. Then, the assertion is proven. Furthermore, $s\geq1$ by the statement  {\bf (\romannumeral2)} of Theorem \ref{thm1} or  {\bf (\romannumeral2$^*$)} of Theorem \ref{thm2}. Thus, $n=s=1$, i.e., we can let $f(x)=b_1x$ with $b_1\ne0$.

Applying the statement {\bf (\romannumeral4)} of Theorem \ref{thm1} again, $g(x)$ must be odd since $f(x)$ is odd. Moreover, since $f(x)=b_1x$ and $s=1$, we can get that $r=1$ or $r=3$ by the statement  {\bf (\romannumeral2)} of Theorem \ref{thm1} or {\bf (\romannumeral2$^*$)} of Theorem \ref{thm2}. To obtain the concrete form of system \eqref{1}, we distinguish two cases: $r=1$ and $r=3$. In  the first case, the global center is    linear   type, and in the second case, it is nilpotent type.

Case 1: $r=1$. System \eqref{1} has the following form,
\begin{equation}
	\label{quintic}
	\left\{\begin{aligned}
		\dot{x}=&y,
		\\
		\dot{y}=&-(a_1x+a_3x^3+a_5x^5)-b_1xy,
	\end{aligned}
	\right.
\end{equation}
with $a_1a_5b_1\ne0$. Besides, it follows from $xg(x)>0$ for $x\ne 0$ that $a_1>0$ and $a_5>0$. A scaling transformation 
	$$
	(x,y,t)\to\left(a_1^{\frac{1}{4}}a_5^{-\frac{1}{4}}x, a_1^{\frac{3}{4}}a_5^{-\frac{1}{4}}y, a_1^{-\frac{1}{2}}t\right)
	$$	
brings system \eqref{quintic} to 
\begin{equation}
	\label{quintic1}
	\left\{\begin{aligned}
		\dot{x}=&y,
		\\
		\dot{y}=&-(x+ax^3+x^5)-bxy,
	\end{aligned}
	\right.
\end{equation}
where $a=a_1^{-1/2}a_3a_5^{-1/2}$ and $b=b_1a_1^{-1/4}a_5^{-1/4} \ne 0$.
Further, we can obtain that $a>-2$ from $xg(x)=x^2+ax^4+x^6>0$ for $x\ne0$. Thus, system \eqref{quintic1} with $a>-2$ and $b\ne0$  has a linear type global center at the origin. 

Case 2: $r=3$. Rewrite system \eqref{1} as 
\begin{equation}
	\label{tri-quintic}
	\left\{\begin{aligned}
		\dot{x}=&y,
		\\
		\dot{y}=&-(a_3x^3+a_5x^5)-b_1xy,
	\end{aligned}
	\right.
\end{equation}
where $a_3a_5b_1\ne0$. Similarly, we have $a_3>0$ and $a_5>0$.
With a transformation 
	$$
	(x,y,t)\to\left(a_3^{\frac{1}{2}}a_5^{-\frac{1}{2}}x, a_3^{\frac{3}{2}}a_5^{-1}y, a_3^{-1}a_5^{\frac{1}{2}}t\right),
	$$
system \eqref{tri-quintic} is changed into 
\begin{equation}
	\label{tri-quintic1}
	\left\{\begin{aligned}
		\dot{x}=&y,
		\\
		\dot{y}=&-(x^3+x^5)-cxy,
	\end{aligned}
	\right.
\end{equation}
where $c=b_1a_3^{-1/2}\ne0$.
Moreover, by the statement {\bf (\romannumeral2$^*$)} of Theorem \ref{thm2}, $c^2-8<0$ and $c\neq0$, i.e. $c\in\left(-2\sqrt{2},0\right)\cup\left(0,2\sqrt{2}\right)$. Thus,  system \eqref{tri-quintic1} with $c\in\left(-2\sqrt{2},0\right)\cup\left(0,2\sqrt{2}\right)$ has a nilpotent type global center at the origin.
The proof is finished.
$\hfill{} \Box$
\subsection{Proof of Proposition \ref{pro5}.} 
\label{3.5}
Considering Li\'enard system \eqref{2m+1} of degree $2k+1$, let $g(x):=x+ax^{2k+1}$ and $f(x):=x+bx^l$. Thus, $r=1$, $m=2k+1$, $s=1$,   $n=l$ for $b\ne0$, $n=1$ for $b=0$, where $m, r,  n,s$ stand  respectively for the highest and lowest orders of $g(x)$ and $f(x)$. 

First we shall prove that system \eqref{2m+1} has a global center in one of $\mathcal{S}_1,\ldots,\mathcal{S}_4$.
In  these spaces, $xg(x)>0$ for all $x\ne0$ since $a>0$.  
One can check that the statement {\bf (\romannumeral1)} of Theorem \ref{thm1} holds. 
Since $r=1$, $s=1$ and $a>0$,  statement {\bf (\romannumeral2)} of Theorem \ref{thm1} holds.
 Moreover, we can obtain that $g(x)$ and $f(x)$ are odd functions when $(m,n,a,b)\in\mathcal{S}_i$, where $i=1,2,3,4$.   
Then, we will check that statement {\bf (\romannumeral3)} of Theorem \ref{thm1} holds in $\mathcal{S}_1,\ldots,\mathcal{S}_4$. 

Case 1: $(m,n,a,b)\in \mathcal{S}_1$.  In this case, $m=2k+1$ is odd and $n=l$.  Thus, $m>2n+1$ for $k>l$. Then   statement {\bf (\romannumeral3)} of Theorem \ref{thm1} holds since $a>0$ and $k>l$.

Case 2: $(m,n,a,b)\in \mathcal{S}_2$. In this case, $m=2k+1$ is odd,  $n=l$, $a_m=a$ and $b_n=b$. It follows from $k=l$ and $4(n+1)ab^{-2}>1$ that statement {\bf (\romannumeral3)} of Theorem \ref{thm1} holds.

Case 3: $(m,n,a,b)\in \mathcal{S}_3$.  In this case, $m=2k+1$ is odd,  $n=1$. Thus, $2k+1>3$ for $k>1$.   Then  statement {\bf (\romannumeral3)} of Theorem \ref{thm1} holds since $a>0$ and $k>1$.

Case 4: $(m,n,a,b)\in \mathcal{S}_4$. 
In this case $m=3$,  $n=1$, $a_m=a$ and $b_n=1$.
When $a>1/8$, $4(n+1)a_mb_n^{-2}>1$ holds. Consequently, statement {\bf (\romannumeral3)} of Theorem \ref{thm1} holds.

 In summary, statement {\bf (\romannumeral3)} of Theorem \ref{thm1} holds for $(m,n,a,b)\in\mathcal{S}_i$ where $i=1,2,3,4$.  Thus, by Corollary \ref{thm5}, system  \eqref{2m+1} has a linear type global center at the origin.
 
Our  next task now in this proof is to show that the parameters of system \eqref{2m+1}  belong to any one of spaces $\mathcal{S}_1,\ldots,\mathcal{S}_4$ when 
system \eqref{2m+1} has a global center at the origin. Since $g(x)$ is odd,  by Corollary \ref{thm5}, statements {\bf (\romannumeral1)-(\romannumeral3)} of Theorem \ref{thm1} hold and $f(x)$ is an odd function.  

It is easy to check  statement {\bf (\romannumeral2)}  holds for all $(k,l,a,b)\in\mathbb{N}^2\times\mathbb{R}^2$. 
Then we can obtain $a\geq0$ from statement {\bf (\romannumeral1)}. Moreover, $a\ne0$ because the degree of system \eqref{2m+1} is $2k+1$. Thus, $a>0$.  Consider $b\ne0$. In this case, $l$ is odd since $f(x)$ is an odd function.
 It follows from  statement {\bf (\romannumeral3)} that either  $k>l$ or $k=l$, $4(l+1)ab^{-2}>1$. Therefore, $(k,l,a,b)\in \mathcal{S}_1$ or $(k,l,a,b)\in \mathcal{S}_2$.
  Consider $b=0$. In this case, $f(x)=x$ and $n=1$.  We can obtain that $k>1$ or  $k=1$, $8a>1$ from statement {\bf (\romannumeral3)}. Thus $(k,l,a,b)\in \mathcal{S}_3$ or $(k,l,a,b)\in \mathcal{S}_4$. In summary, if system \eqref{2m+1} has a global center at the origin, the parameters   belong to any one of spaces $\mathcal{S}_1,\ldots,\mathcal{S}_4$.
 The proof is complete.
 $\hfill{} \Box$

\section{Remarking conclusions}
\label{rc}
By Theorem \ref{thm3}, we can obtain that all orbits are bounded of system \eqref{1}  when condition {\bf (\romannumeral4)}  of Theorem \ref{thm1} holds.
Naturally, we want to know the properties of boundedness of orbits of system \eqref{1} when condition {\bf (\romannumeral4)}  of  Theorem \ref{thm1} does not hold.
First, we give the following lemma.
\begin{lemma}
	Assume that conditions {\bf (\romannumeral1)} and {\bf (\romannumeral3)}  of {\rm  Theorem \ref{thm1}} hold. 
	there is a value $\hat w\geq0$ such that $F_1(\hat w)=F_2(\hat w)$ and either  $F_1(w)<F_2(w)$ or $F_1(w)> F_2(w)$ for $w>\hat w$
when condition {\bf (\romannumeral4)}  of {\rm Theorem \ref{thm1}} does not hold,
where $w(x)$, $F_1(w)$ and $F_2(w)$ are still defined in Section \ref{pre}.
\label{c4}
\end{lemma}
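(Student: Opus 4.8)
The plan is to reduce the statement to a claim about the single function $\phi(w):=F_1(w)-F_2(w)$ and then to analyze its behaviour as $w\to+\infty$. First I would recall from the proof of Theorem \ref{thm1} that condition {\bf (\romannumeral4)} is equivalent to $F_1\equiv F_2$; hence, when {\bf (\romannumeral4)} fails, $\phi\not\equiv0$. By the argument in the necessity part of Theorem \ref{thm3}, both $F_1$ and $F_2$ are analytic, so $\phi$ is a nonzero analytic function on $[0,+\infty)$ whose zeros are therefore isolated. Since $F_1(0)=F(x_1(0))=F(0)=0=F(x_2(0))=F_2(0)$, the value $w=0$ always lies in the zero set $Z:=\{w\ge0:\phi(w)=0\}$, so $Z$ is a nonempty closed set. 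The whole statement then amounts to showing that $Z$ is bounded above: its maximum $\hat w$ is then a coincidence point $F_1(\hat w)=F_2(\hat w)$, and since $\phi$ has no zero on $(\hat w,+\infty)$ and is continuous, it keeps a constant sign there, which is exactly the asserted dichotomy $F_1<F_2$ or $F_1>F_2$ for $w>\hat w$.

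The substance of the proof is therefore to rule out zeros of $\phi$ accumulating at $+\infty$, i.e.\ to show that $\phi$ has a definite sign for all large $w$. Here condition {\bf (\romannumeral3)} enters, since it guarantees in particular that $m$ is odd and $a_m>0$: the leading term of $G$ is then $\frac{a_m}{m+1}|x|^{m+1}\to+\infty$ on both sides, so $w(x)\to+\infty$ as $x\to\pm\infty$ and both inverse branches $x_1(w)\to-\infty$, $x_2(w)\to+\infty$ are defined for all $w\ge0$; in particular $F_1$, $F_2$, and hence $\phi$, are globally defined on $[0,+\infty)$. The key observation is that $G$ is a polynomial, so the relation $(r+1)G(x)=w^{r+1}$ from \eqref{z} makes each branch $x_i(w)$ an algebraic function of $w$; consequently $F_i(w)=F(x_i(w))$ is algebraic as well and admits a convergent Puiseux expansion in rational powers of $w$ as $w\to+\infty$. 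Thus $\phi$ has an expansion $\phi(w)=c_0w^{\beta}\left(1+o(1)\right)$ with a well-defined leading exponent $\beta$, and because $\phi\not\equiv0$ its leading coefficient $c_0$ is nonzero. Hence $\operatorname{sign}\phi(w)=\operatorname{sign}(c_0)$ for all sufficiently large $w$, so $Z$ is bounded above, and the reduction of the previous paragraph then finishes the proof.

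I expect the Puiseux/asymptotic step to be the main obstacle, since it is precisely what excludes an oscillatory accumulation of coincidences at infinity. It may be cleaner to split it according to the parity of $n$. When $n$ is even, $F(x)\sim\frac{b_n}{n+1}x^{n+1}$ is odd to leading order while $x_1\sim-x_2$ (because $G$ is even to leading order, $m+1$ being even), so already $\phi(w)\sim-\frac{2b_n}{n+1}x_2(w)^{\,n+1}$ has a definite sign and the conclusion is immediate. When $n$ is odd the leading terms of $F_1$ and $F_2$ cancel, and one must descend to the first order at which the non-evenness of $G$ (through the coefficients $a_{m-1},\dots$) and of $F$ produces a nonvanishing coefficient; that such an order exists is guaranteed precisely because $\phi\not\equiv0$. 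Either route yields the constant sign of $\phi$ near infinity, and the remaining bookkeeping (closedness of $Z$, existence of the maximum, and sign persistence via continuity) is routine.
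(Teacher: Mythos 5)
Your proposal is correct and follows essentially the same route as the paper: both arguments take $\hat w$ to be the largest zero of $F_1-F_2$ and then obtain the sign dichotomy on $(\hat w,+\infty)$ from continuity and the intermediate value theorem. The only difference is one of rigor, not of method: where the paper asserts the existence of a largest coincidence value in one clause (``since $F(x)$ and $G(x)$ are polynomial''), you justify it by noting that the branches $x_i(w)$ are algebraic, so $F_1-F_2$ is a nonzero algebraic (and analytic) function whose Puiseux expansion at infinity forces a definite sign for large $w$ --- a legitimate filling-in of the step the paper leaves implicit.
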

\begin{proof}
It is obvious that $F_1(w)\not\equiv F_2(w)$ 
when condition {\bf (\romannumeral4)}  of Theorem \ref{thm1} does not hold. Then, we can assume that $\hat w\geq 0$ is the largest value  satisfying $F_1( w) =F_2( w)$ since $F(x)$ and $G(x)$ are polynomial. In other words, we have either  $F_1(w)<F_2(w)$ or $F_1(w)> F_2(w)$ for $w>\hat w$. Otherwise, there are two values $w_4>\hat w$ and $w_5>\hat w$ satisfying $F_1(w_4)<F_2(w_4)$ and $F_1(w_5)>F_2(w_5)$. On the one hand, we can find a value $w_6\in (\min\{w_4,w_5\}, \max\{w_4,w_5\})$ satifying  $F_1(w_6) =F_2(w_6)$ by intermediate value theorem.  On the other hand, $\hat w\geq 0$ is  the largest value  satisfying $F_1( w) =F_2(w)$. This is a contradiction.
The proof is finished.
\end{proof}

The following proposition will show an interesting result   when condition {\bf (\romannumeral4)}  of Theorem \ref{thm1} does not hold. Naturally,  by Lemma \ref{c4}, 	there is a value $\hat w\geq0$ such that $F_1(\hat w)=F_2(\hat w)$ and either  $F_1(w)<F_2(w)$ or $F_1(w)> F_2(w)$ for $w>\hat w$. 
\begin{proposition}
	\label{pro10}
Assume that conditions {\bf (\romannumeral1)} and {\bf (\romannumeral3)}  of {\rm  Theorem \ref{thm1}} hold. Then, all orbits of system \eqref{1} are positive bounded $($resp. negative bounded$)$ if and only if there  exists a value $\hat w\geq0$ such that $F_1(w)<($resp. $>)
F_2(w)$ for all $w>\hat w$.
\end{proposition}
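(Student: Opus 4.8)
The plan is to analyze the first-return map of the equivalent system \eqref{2} on the positive $y$-axis and to relate its contraction or expansion to the sign of $F_2-F_1$ for large $w$. Throughout I work in the regime in which condition {\bf (\romannumeral4)} of Theorem \ref{thm1} fails, so that Lemma \ref{c4} applies and exactly one of $F_1(w)<F_2(w)$, $F_1(w)>F_2(w)$ holds for all $w>\hat w$.

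First I would reduce to a single case by symmetry. The involution $(y,t)\mapsto(-y,-t)$ carries system \eqref{2} into a system of the same form with $F$ replaced by $-F$, hence $F_1,F_2$ by $-F_1,-F_2$; this interchanges the two alternatives of Lemma \ref{c4} and, since it reverses time, interchanges ``positive bounded'' with ``negative bounded''. Thus it suffices to prove that $F_1(w)<F_2(w)$ on $(\hat w,\infty)$ is equivalent to every orbit being positive bounded, after which the parenthetical ``negative bounded'' equivalence follows verbatim.

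Next I would set up the return map. Under {\bf (\romannumeral1)} and {\bf (\romannumeral3)}, by Lemmas \ref{lem5} and \ref{lem7} the origin is the unique finite equilibrium (an anti-saddle) and there is no orbit connecting the equilibria at infinity (the infinity portrait is Figure \ref{tu7}(f)); consequently every orbit winds around the origin and meets the positive $y$-axis in a sequence of points, so a first-return map $R$ on $\{x=0,\,y>0\}$ is well defined. Exactly as in Section \ref{pre}, the halves $x\ge 0$ and $x\le 0$ of a loop correspond in the $w$-$y$ plane to arcs $\gamma_2,\gamma_1$ of \eqref{z2},\eqref{z1} issuing from the common bottom point $B'=(0,y_B)$ and returning to the $y$-axis at $A'=(0,y_A)$ and $C'=(0,y_C)$, with $R(y_A)=y_C$, the loop being closed iff $A'=C'$. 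The heart of the proof is the claim that for all sufficiently large $y_A$ the sign of $R(y_A)-y_A$ equals the sign of $F_1(w)-F_2(w)$ for large $w$; in particular $F_1<F_2$ on $(\hat w,\infty)$ forces $R(y_A)<y_A$ for large orbits. To prove it I would run the comparison argument of Theorem \ref{thm3}, using \cite[Theorem 6.1 of Chapter 1]{Hale} on the region $w>\hat w$, where $F_1<F_2$ makes $\gamma_1$ lie strictly inside $\gamma_2$ and hence return to the $y$-axis below it; the contribution of the fixed bounded strip $0\le w\le\hat w$, on which $F_1-F_2$ has no definite sign, is uniformly bounded and is dominated, for large orbits, by the definite-sign contribution accumulated on $w>\hat w$. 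Equivalently, one may integrate $dH=F\,dy$ with $H=\tfrac12 y^2+w^{r+1}/(r+1)$ around the loop and check that the leading part of $\tfrac12(y_C^2-y_A^2)$ is a negative multiple of a positively weighted integral of $F_2-F_1$. This comparison, together with the control of the strip $w\le\hat w$, is the step I expect to be the main obstacle.

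Finally I would assemble the equivalence. For sufficiency, since $\dot x=y>0$ on the positive $y$-axis the arc of a large orbit from $A$ around to $C$ together with the segment $[C,A]$ bounds a positively invariant bounded region; as $R(y_A)<y_A$ for every $y_A$ above some threshold, each forward orbit has a decreasing, hence bounded, sequence of $y$-axis crossings and is trapped in such a region, so all orbits are positive bounded. For necessity, if instead $F_1>F_2$ on $(\hat w,\infty)$, then by the claim $R(y_A)>y_A$ for large orbits, which therefore escape to infinity in forward time and fail to be positive bounded; by the dichotomy of Lemma \ref{c4} the only remaining possibility is $F_1<F_2$ on $(\hat w,\infty)$. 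Applying the symmetry reduction then yields the ``negative bounded'' statement, completing the proof.
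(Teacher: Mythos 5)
Your proof follows the same architecture as the paper's: reduce to one case by the symmetry $(y,t)\mapsto(-y,-t)$, pass to the $w$-$y$ plane of Section \ref{pre}, show that orbits with sufficiently large $y$-intercept spiral strictly inward when $F_1<F_2$ on $(\hat w,\infty)$, trap every forward orbit inside such a loop, and get necessity from the dichotomy of Lemma \ref{c4} by reversing the argument. The genuine gap is exactly in the step you flag as the main obstacle, and the two justifications you sketch for it fail as stated. First, Hale's comparison theorem cannot be applied ``on the region $w>\hat w$'' the way it is applied in Theorem \ref{thm3}: there the two curves being compared issue from the \emph{same} point $B'$, whereas here the \eqref{z1}-curve and the \eqref{z2}-curve enter the half-plane $w>\hat w$ at \emph{different} heights, and the sign of that discrepancy is uncontrolled because it is produced in the strip $0\le w\le\hat w$, where $F_1-F_2$ has no definite sign. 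If the \eqref{z1}-curve happens to enter below the \eqref{z2}-curve, the comparison theorem alone never re-orders them. Second, asserting that the strip contribution is ``uniformly bounded'' and ``dominated'' by the definite-sign contribution is not the right pair of estimates: the definite-sign contribution need not grow with the size of the orbit (when $F_2-F_1$ stays bounded it is merely bounded below), so a bounded error of the wrong sign could swamp it.

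What the paper proves at this point is quantitative and is precisely the missing content. Since $dy/dw=w^r/(F_i(w)-y)$, both curves cross the strip almost horizontally when $y_A$ is large, so their heights at $w=\hat w$ differ by less than any prescribed $\varepsilon$ (the discrepancy \emph{vanishes} as $y_A\to\infty$, it is not merely bounded); then, setting $\varphi(w)=y_1(w)-y_2(w)$, a variation-of-constants identity gives
\[
\varphi(w)=\varphi(\hat w)\exp\Bigl(\int_{\hat w}^{w}H_2(s)\,ds\Bigr)
+\int_{\hat w}^{w}\frac{z^r\bigl(F_2(z)-F_1(z)\bigr)}{\bigl(F_1(z)-y_1(z)\bigr)\bigl(F_2(z)-y_2(z)\bigr)}
\exp\Bigl(\int_{z}^{w}H_2(s)\,ds\Bigr)dz,
\]
where $H_2(w)=w^r/\bigl((F_1(w)-y_1(w))(F_2(w)-y_2(w))\bigr)>0$; the forcing term is strictly positive on $(\hat w,\min\{w_P,w_{P_1}\})$ exactly because $F_2-F_1>0$ there, and one checks it beats $|\varphi(\hat w)|<\varepsilon$. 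This yields $\varphi>0$ at the first turning point, hence the strict nesting $w_{P_1}>w_P$ (and, by the same argument started at $B'$, $w_{Q_1}<w_Q$), which is what inward spiraling means. Your alternative route via $dH=F\,dy$ has the same hole: to determine the sign of $\tfrac12(y_C^2-y_A^2)$ you must compare the two halves of the loop as graphs over $y$, which is again this estimate. A further small point: in your necessity step, ``$R(y_A)>y_A$ for large $y_A$, hence the orbit escapes'' needs the remark that a bounded increasing sequence of crossings would converge to a fixed point of $R$, contradicting $R>\mathrm{id}$ at large values.
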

\begin{proof}
On the one hand,
considering condition {\bf (\romannumeral1)}   of {\rm  Theorem \ref{thm1}}, system \eqref{1} has a unique equilibrium, which is an anti-saddle  by Lemma \ref{lem5}.  On the other hand,
considering condition  {\bf (\romannumeral3)}  of {\rm  Theorem \ref{thm1}}, system \eqref{1} has   no orbits connecting the equilibria at infinity in the Poincar\'e disc by Lemma  \ref{lem7}. 
In what follows,  for  convenience, we consider system \eqref{2}  because it is equivalent to system \eqref{1}.

Firstly, we show the sufficiency of this proposition.
  It suffices to prove this proposition in the case $F_1(w)<F_2(w)$ and  we can similarly obtain the
  results in the case $F_1(w)>F_2(w)$.

  Consider an
  orbit segment of system \eqref{2} with a point $A$ in the positive $y$-axis, where the ordinate $y_A$ of $A$ is sufficiently large.  It follows from nonexistence of orbits connecting the equilibria at infinity that this orbit must  cross the positive $x$-axis, intersect with the negative $y$-axis at a point $B: (0,y_B)$  and then intersect with the positive $y$-axis again at a point $C$, as shown in Figure \ref{tu1}(a). We can choose a point $A$ such that  $|y_B|$ is also sufficiently large, where $y_A$ is  large enough.
  
  	\begin{figure}[hpt]
  	\centering
  	{ 			\includegraphics[width=0.75\textwidth]{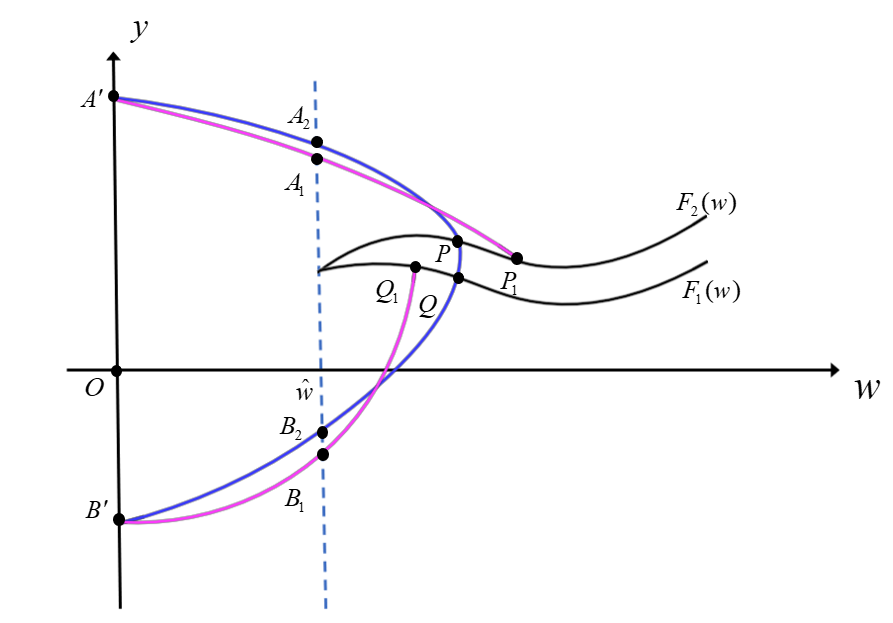}}
  	\caption{Orbits in $w$-$y$ plane. }
  	\label{tu13}
  \end{figure}
By the transformation \eqref{z},
let  the  orbit segment  $\widehat{ABC}\bigcap\{(x,y)|x\geq0\}$ be corresponded to $\widehat{A'PB'}$ in $w$-$y$ plane, where $P: (w_{P}, y_{P})$ lies on $y=F_2(w)$ and $A_2:(\hat w, y_{A_2})$, $B_2:(\hat w, y_{B_2})$ are  two points on $w=\hat w$, 
 as shown in Figure \ref{tu13}.
 
Consider an  integral curve  $\widehat{A'A_1P_1}$ of \eqref{z1}, where $A_1:(\hat{w}, y_{A_1})$ 
 lies on $w=\hat w$ and $P_1: (w_{P_1}, y_{P_1})$ lies on $y=F_2(w)$. We claim that $w_{P_1} >w_P$.
 In fact, 
 since integral curve $\widehat{A'A_2}$ (resp. $\widehat{A'A_1}$) satisfies  equation  \eqref{z2} (resp. \eqref{z1}), 
 we can obtain that 
 $$y_{A_2}-y_{A'}=\int_{0}^{\hat w}\frac{w^r}{F_2(w)-y_2(w)}dw \text{ (resp.  }
 y_{A_1}-y_{A'}=\int_{0}^{\hat w}\frac{w^r}{F_1(w)-y_1(w)}dw \text{)},$$
 where $y_2(w)$ (resp. $y_1(w)$) represents the integral curve $\widehat{A'A_2P}$ (resp. $\widehat{A'A_1P_1}$). Since $y_{A'}=y_A$ is sufficiently large, then there exists a sufficiently  small constant $\varepsilon>0$ such that 
 $$y_{A_2}-y_{A'}<\varepsilon/2$$ and 
  $$y_{A_1}-y_{A'}<\varepsilon/2.$$\
  Thus, we can obtain $$|y_{A_2}-y_{A_1}|<\varepsilon.$$
  
  Let $\varphi(w)=y_1(w)-y_2(w)$. Thus, $|\varphi(\hat w)|=|y_{A_2}-y_{A_1}|<\varepsilon$. For $w\in(\hat w,\min\{w_P,w_{P_1}\})$,
 it follows from  that 
     \begin{equation}
     	\label{12}
 \begin{aligned}
   	 \varphi(w)=&\int_{\hat w}^{w}\frac{z^r}{F_1(z)-y_1(z)}dz-\int_{\hat w}^{w}\frac{z^r}{F_2(z)-y_2(z)}dz+\varphi(\hat w)\\
   	 =&\int_{\hat w}^{w} \frac{z^r(F_2(z)-F_1(z))}{(F_1(z)-y_1(z))(F_2(z)-y_2(z))}dz\\
   	&\qquad + \int_{\hat w}^{w}\frac{z^r(y_1(z)-y_2(z))}{(F_1(z)-y_1(z))(F_2(z)-y_2(z))}dz
   	 +\varphi(\hat w)\\
   	 =:&H_1(w)+\int_{\hat w}^{w}H_2(z)\varphi(z)dz,
   	\end{aligned}
   \end{equation}
  where 
  $$
  H_1(w)=\varphi(\hat w)+\int_{\hat w}^{w} \frac{z^r(F_2(z)-F_1(z))}{(F_1(z)-y_1(z))(F_2(z)-y_2(z))}dz
  $$ 
  and
   $$
   H_2(w)=\frac{w^r}{(F_1(w)-y_1(w))(F_2(w)-y_2(w))}.
   $$
 Letting $H(w)=\int_{\hat w}^{w}H_2(z)\varphi(z)dz$, it follows from \eqref{12} that 
 \begin{equation}
 \label{H}
 \frac{dH(w)}{dw}=H_2(w)\varphi(w)=H_2(w)H_1(w)+H_2(w)H(w).
\end{equation} 
Using the  constant variation formula to solve equation \eqref{H}, we obtain that
 $$
 H(w)=\int_{\hat w}^{w}H_2(z)H_1(z)\exp\left(\int_{z}^{w}H_2(s)ds\right)dz.
 $$
  Thus, 
   \begin{equation}
  \notag
  	\begin{aligned}
  \varphi(w)=&H_1(w)+H(w)\\
  =&H_1(w)+\int_{\hat w}^{w}H_2(z)H_1(z)\exp\left(\int_{z}^{w}H_2(s)ds\right)dz\\
  =&H_1(\hat w)\exp\left(\int_{\hat w}^{w}H_2(s)ds\right)+\int_{\hat w}^{w}H'_1(z)\exp\left(\int_{z}^{w}H_2(s)ds\right)dz\\
  =&\varphi(\hat w)\exp\left(\int_{\hat w}^{w}H_2(s)ds\right)+\int_{\hat w}^{w}H'_1(z)\exp\left(\int_{z}^{w}H_2(s)ds\right)dz.
   	\end{aligned}
\end{equation}
  Since 
  $$
  H'_1(w)=\frac{w^r(F_2(w)-F_1(w))}{(F_1(w)-y_1(w))(F_2(w)-y_2(w))}>0
  $$ 
  for 
  $w\in (\hat w,  \min \{w_P, w_{P_1}\})$,
   we can have that 
  $$
  \int_{\hat w}^{w}H'_1(z)\exp\left(\int_{z}^{w}H_2(s)ds\right)dz>0.
  $$
  Moreover, $|\varphi(\hat w)|<\varepsilon$. One can check that 
  $\varphi(\w w)>0$, where $\w w= \min \{w_P, w_{P_1}\}$. 
  Consequently,  we have $w_{P_1} >w_P$.
  
  Consider $\widehat{B'B_1Q_1}$, the  integral curve starting from $B'$  of \eqref{z1}.  We can similarly prove $w_{Q_1}<w_Q$, where $Q_1$,  $Q$ are lie on $y=F_1(w)$ and $w_{Q_1}, w_Q$ are respectively abscissas of $Q_1$, $Q$.  Returning to the $x$-$y$ plane of system \eqref{2}, we get that its all orbits are positive-bounded. The proof of the sufficiency is done.
  
  Next, we consider the necessity. Similarly, we only need to prove that there 
exists a value $\hat w\geq0$ such that $F_1(w)<
 F_2(w)$ for all $w>\hat w$ when all orbits of system \eqref{1} are positive bounded. We choose $\hat w$ to be the largest   value  satisfying $F_1( w) =F_2( w)$. Using contradiction, according to the proof Lemma \ref{c4},  $F_1(w)>
 F_2(w)$ for all $w>\hat w$  is the inverse of $F_1(w)<
 F_2(w)$ for all $w>\hat w$. As proven above, we can obtain that all orbits of system \eqref{1} are negative bounded if there exists a value $\hat w\geq0$ such that $F_1(w)>
 F_2(w)$ for all $w>\hat w$, which is a contradiction. The proof of  the necessity is finished, so is this proposition.
\end{proof}
	\section*{Appendix }
	For the sake of completeness, we will show how to obtain the qualitative properties of system \eqref{m} at infinity in following two cases: (C1) $m=2n+1$, $\epsilon>(4n+4)^{-1}$, (C2) $m>2n+1$, $m$ is odd, $\epsilon=1$ in the appendix.
	
	Consider the case (C1).
 By a Poincar\'e transformation
 $$
 x=\frac{1}{z},\quad y=\frac{u}{z},
 $$
 system \eqref{m} is changed to
 
 \begin{equation}
 \label{x}
 \left\{\begin{aligned}
 \frac{du}{d\tau}=&\epsilon+u^2z^{2n}+\sum_{i=r}^{2n}\widehat{a}_iz^{2n+1-i}+uz^n+\sum_{i=s}^{n-1}\widehat{b}_iuz^{2n-i},\\
 \frac{dz}{d\tau}=&uz^{2n+1},
 \end{aligned}
 \right.
 \end{equation}
 where $d\tau = -dt/z^{2n}$. 
 It is easy to check that system \eqref{x} has no equilibria on $z=0$. 
 
 With the other Poincar\'e transformation   
  $$
  x=\frac{v}{z},\quad y=\frac{1}{z},
  $$
  system \eqref{m} is written as
 \begin{equation}
 \label{vz}
 \left\{\begin{aligned}
 \frac{dv}{d\tau}=&z^{2n}+\epsilon v^{2n+2}+\sum_{i=r}^{2n}\widehat{a}_iv^{i+1}z^{2n+1-i}+v^{n+1}z^{n}+\sum_{i=s}^{n-1}\widehat{b}_iv^{i+1}z^{2n-i},\\
 \frac{dz}{d\tau}=&\epsilon v^{2n+1}z+\sum_{i=r}^{2n}\widehat{a}_iv^{i}z^{2n+2-i}+v^{n}z^{n+1}+\sum_{i=s}^{n-1}\widehat{b}_iv^{i}z^{2n+1-i},
 \end{aligned}
 \right.
 \end{equation}
 where $d\tau = dt/z^{2n}$.
Notice that system \eqref{vz} has a unique equilibrium $B:(0,0)$ on $z=0$.
Furtherly, we can obtain that $B$ is a degenerate equilibrium. 
	With a polar transformation $(v,z)=(r\cos\theta,r\sin\theta)$,   system \eqref{vz} can be written as
	\begin{equation}
	\notag
	\frac{1}{r}\frac{dr}{d\theta}=\frac{H_1(\theta)+O(r)}{G_1(\theta)+O(r)},
	\end{equation}
	where 
	$G_1(\theta)=-\sin^{2n+1}\theta$ and $H_1(\theta)=\cos\theta\sin^{2n}\theta$.
		A necessary condition on existence of exceptional directions is $G_1(\theta)=0$ by \cite[Chapter 2]{ZDHD}.  Obviously,  $G_1(\theta)=0$   has exactly two roots $0$, $\pi$ in $\theta\in[0,2\pi)$.
		However,  $H_1(0)=H_1(\pi)=0$. Thus, we 
		cannot apply  the normal sector method  (see \cite[Chapter 2]{ZDHD}) to analyze the two exceptional directions
			$\theta=0$, $\pi$  of $B$  for system \eqref{vz}.  Instead, we adopt Briot--Bouquet transformations to blow up the two directions.
			
				With the Briot--Bouquet transformation
				$	z=\widetilde{z} v$, 
				system \eqref{vz} is changed into
 \begin{equation}
 \label{wz}
 \left\{\begin{aligned}
 \frac{dv}{d\delta}=&v{\w {z}}^{2n}+\epsilon v^{3}+\sum_{i=r}^{2n}\widehat{a}_iv^{3}{\w z}^{2n+1-i}+v^{2}{\w z}^{n}+\sum_{i=s}^{n-1}\widehat{b}_iv^{2}{\w z}^{2n-i},\\
 \frac{d\w {z}}{d\delta}=&-{\w z}^{2n+1},
 \end{aligned}
 \right.
 \end{equation}				
						where $d\delta=v^{2n-1}d\tau$.
System \eqref{wz} has a unique equilibrium 	$E: (0,0)$, which is degenerate. 
	The polar change of variables $(v,\w z)=(r \cos\theta,r\sin\theta)$ sends system \eqref{wz} to
	\begin{equation}
	\notag
	\frac{1}{r}\frac{dr}{d\theta}=\frac{H_2(\theta)+O(r)}{G_2(\theta)+O(r)},
	\end{equation}
	where
	$$
	G_2(\theta)=
	\left\{\begin{aligned}
&	-\sin\theta\cos\theta(2\sin^2\theta+\sin\theta\cos\theta+\epsilon\cos^2\theta), \qquad &\text{ if } n=1,\\
&	-\epsilon\sin\theta\cos^3\theta,\qquad &\text{ if } n>1,\\
 \end{aligned}
 \right.
 $$
	and
	$$
	H_2(\theta)=
		\left\{\begin{aligned}
	&	-\sin^4\theta+\sin^2\theta\cos^2\theta+\sin\theta\cos^3\theta+\epsilon\cos^4\theta, \qquad &\text{ if } n=1,\\
	&	\epsilon\cos^4\theta,\qquad &\text{ if } n>1.\\
	\end{aligned}
	\right.
	$$
	
	Firstly, consider $n=1$.  Since $\epsilon>1/8$, the equation 
	$ 2\sin^2\theta+\sin\theta\cos\theta+\epsilon\cos^2\theta=0$ has no real roots. Thus, $G_2(\theta)$ has four simple zeros $\theta=0$,  $\pi/2$,   $\pi$, $3\pi/2$  in $[0,2\pi)$. Moreover, we have
	 $$
	 G_2'(0)H_2(0)=G_2'(\pi)H_2(\pi)=-\epsilon^2<0
	 $$
	  and 
	  $$
	  G_2'(\pi/2)H_2(\pi/2)=G_2'(3\pi/2)H_2(3\pi/2)=-2<0.
	  $$
	   Thus, by 
	   $$
	   H_2(0)=H_2(\pi)=\epsilon>0, H_2(\pi/2)=H_2(3\pi/2)=-1<0
	   $$
	    and \cite[Theorem 3.7   of Chapter 2]{ZDHD}, system \eqref{wz} has
	a unique orbit  connecting $E$ in respectively the directions $\theta = \pi/2$ and $3\pi/2$  as $\delta\to+\infty$, and 	a unique orbit  connecting $E$ in respectively the
	directions $\theta= 0$ and $\pi$  as $\delta\to-\infty$, 
	see Figure \ref{tu9}(a).  Therefore, blowing down the equilibrium $E$ of system \eqref{wz} to $B$ of system \eqref{vz} yields that there is a unique orbit connecting $B$ in  the
	direction $\theta= 0$  as $\tau\to-\infty$, and  a unique orbit connecting $B$ in  the
	direction $\pi$ as $\tau\to+\infty$,	see Figure \ref{tu9}(b).
	
		\begin{figure}[hpt]
			
			\centering
			\subfigure[$v$-$\w z$ plane for system \eqref{wz}]{
				\includegraphics[width=0.45\textwidth]{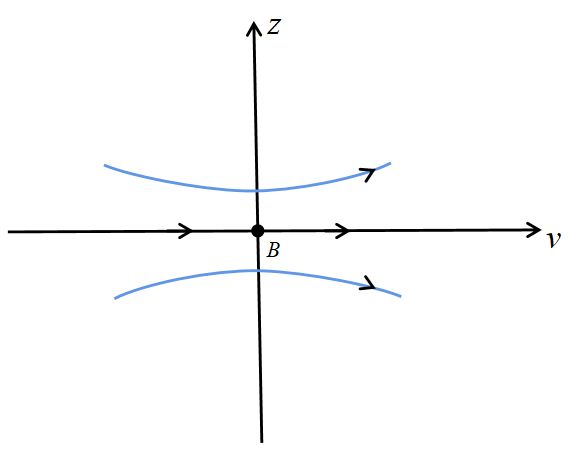}}\quad
			\subfigure[$v$-$z$ plane for system \eqref{vz}]{
				\includegraphics[width=0.45\textwidth]{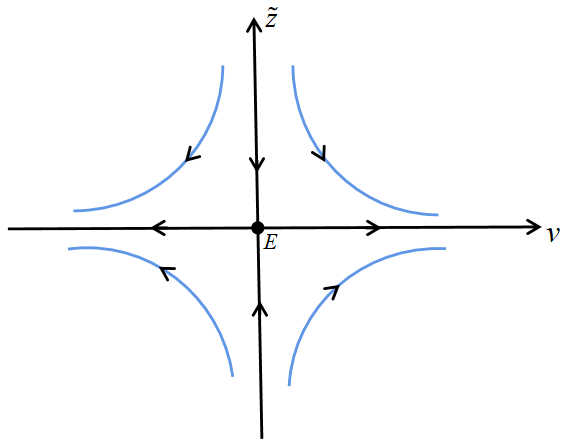}}\quad
			\caption{ Orbits changing under the Briot-Bouquet transformation for $n=1$.}
			\label{tu9}
		\end{figure}

	Then, we consider $n>1$.  It is easy to check that $G_2(\theta)$ has exactly two simple zeros  $0$, $\pi$ and two trible zeros $\pi/2$, $3\pi/2$ in $[0,2\pi)$.  Easy calculation gives that $G_2'(0)H_2(0)=G_2'(\pi)H_2(\pi)=-\epsilon^2<0$. One can obtain that there is  a unique orbit connecting $E$ in respectively the
	directions $\theta= 0$ and $\pi$  as $\delta\to-\infty$ by $H_2(0)=H_2(\pi)=\epsilon>0$ and \cite[Theorem 3.7   of Chapter 2]{ZDHD}.  However, since $H_2(\pi/2)=H_2(3\pi/2)=0$, we need to blow up the two directions $\theta=\pi/2$, $3\pi/2$.
	
	With the quasihomogeneous blow-up  $v={\w z}^{n-1}{\w v}$, (see \cite[Chapter 3.3]{DLA}, or \cite{Dumortier}), we change system
\eqref{wz} into 
\begin{equation}
\label{wv}
\left\{\begin{aligned}
\frac{d\w v}{ds}=&n{\w v}{\w {z}}^{2}+\epsilon {\w v}^{3}+\sum_{i=r}^{2n}\widehat{a}_i{\w v}^{3}{\w z}^{2n+1-i}+{\w v}^{2}{\w z}+\sum_{i=s}^{n-1}\widehat{b}_i{\w v}^{2}{\w z}^{n+1-i},\\
\frac{d\w {z}}{ds}=&-{\w z}^3,
\end{aligned}
\right.
\end{equation}	
	where $ds={\w z}^{2n-2}d\delta$. System \eqref{wv} has a unique equilibrium $F: (0,0)$. Similarly,  transforming system \eqref{wv} into equation
		\begin{equation}
		\notag
		\frac{1}{r}\frac{dr}{d\theta}=\frac{H_3(\theta)+O(r)}{G_3(\theta)+O(r)},
		\end{equation}
by a polar transformation $(\w v,\w z)=(r\cos\theta, r\sin\theta)$, where
$$
	G_3(\theta)=
		-\sin\theta\cos\theta((n+1)\sin^2\theta+\sin\theta\cos\theta+\epsilon\cos^2\theta)
		$$
	and 
	$$
	H_3(\theta)	=
-\sin^4\theta+n\sin^2\theta\cos^2\theta+\sin\theta\cos^3\theta+\epsilon\cos^4\theta.
$$
It follows from $\epsilon>1/(4n+4)$ that 
$$
(n+1)\sin^2\theta+\sin\theta\cos\theta+\epsilon\cos^2\theta=0
$$
 has no roots. Thus, $G_3(\theta)$ has four simple zeros $\theta=0$,  $\pi/2$,   $\pi$, $3\pi/2$  in $[0,2\pi)$.  Moreover, we can check that 
$$
G_3'(0)H_3(0)=G_3'(\pi)H_3(\pi)=-\epsilon^2<0
$$ 
and 
$$
G_3'(\pi/2)H_3(\pi/2)=G_3'(3\pi/2)H_3(3\pi/2)=-(n+1)<0.
$$ 
By 
$$
H_3(0)=H_3(\pi)=\epsilon>0, H_3(\pi/2)=H_3(3\pi/2)=-1<0
$$
 and \cite[Theorem 3.7   of Chapter 2]{ZDHD}, there is a unique orbit  connecting $F$ in respectively the directions $\theta = \pi/2$ and $3\pi/2$  as $s\to+\infty$, and 	a unique orbit  connecting $F$ in respectively the
directions $\theta= 0$ and $\pi$  as $s\to-\infty$, see Figure \ref{tu10}(a). Furtherly, we can obtain that there is a unique orbit  connecting $E$ in respectively the
directions $\theta= \pi/2$ and $3\pi/2$  as $\delta\to+\infty$. The qualitative properties of $E$ are  shown in Figure \ref{tu10}(b). Then, by blowing down the equilibrium $E$ of system \eqref{wz} to $B$ of system \eqref{vz},   there is a unique orbit connecting $B$ in  the
direction $\theta= 0$  as $\tau\to-\infty$, and  a unique orbit connecting $B$ in  the
direction $\pi$ as $\tau\to+\infty$,	see Figure \ref{tu10}(c).

	\begin{figure}[hpt]
		
		\centering
		\subfigure[$\w v$-$\w z$ plane for system \eqref{wv}]{
			\includegraphics[width=0.45\textwidth]{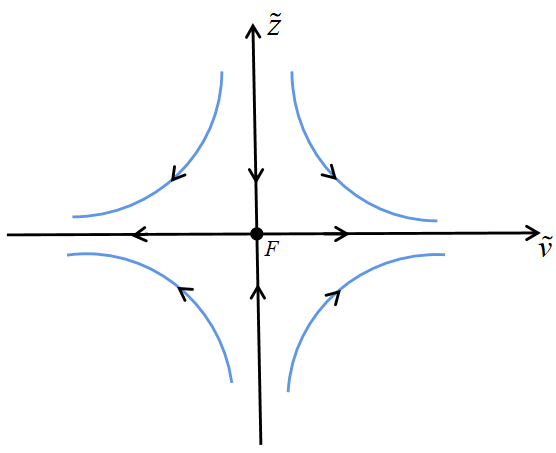}}\quad
		\subfigure[$v$-$\w z$ plane for system \eqref{wz}]{
			\includegraphics[width=0.45\textwidth]{6}}\quad

		\subfigure[$v$-$ z$ plane for system \eqref{vz}]{
			\includegraphics[width=0.45\textwidth]{7}}\quad
		\caption{ Orbits changing under the Briot-Bouquet transformation for $n>1$.}
		\label{tu10}
	\end{figure}
	
		Consider the case (C2). 
Similarly		by a Poincar\'e transformation
		$$
		x=\frac{1}{z},\quad y=\frac{u}{z},
		$$
		system \eqref{m} is changed to
		
		\begin{equation}
		\label{y}
		\left\{\begin{aligned}
		\frac{du}{d\tau}=&1+u^2z^{m-1}+\sum_{i=r}^{m-1}\widehat{a}_iz^{m-1-i}+uz^{m-1-n}+\sum_{i=s}^{n-1}\widehat{b}_iuz^{m-1-i},\\
		\frac{dz}{d\tau}=&uz^{m},
		\end{aligned}
		\right.
		\end{equation}
where $d\tau = -dt/z^{m-1}$. There is 
 no equilibrium on $z=0$ of system \eqref{y}. 

By another Poincar\'e transformation   
$$
x=\frac{v}{z},\quad y=\frac{1}{z},
$$
system \eqref{m} is written as
\begin{equation}
\label{13}
\left\{\begin{aligned}
\frac{dv}{d\tau}=&z^{m-1}+ v^{m+1}+\sum_{i=r}^{m-1}\widehat{a}_iv^{i+1}z^{m-i}+v^{n+1}z^{m-1-n}+\sum_{i=s}^{n-1}\widehat{b}_iv^{i+1}z^{m-1-i},\\
\frac{dz}{d\tau}=& v^{m}z+\sum_{i=r}^{2n}\widehat{a}_iv^{i}z^{m+1-i}+v^{n}z^{m-n}+\sum_{i=s}^{n-1}\widehat{b}_iv^{i}z^{m-i},
\end{aligned}
\right.
\end{equation}
where $d\tau = dt/z^{m-1}$. Then, $B: (0,0)$ is the unique equilibrium of system \eqref{13} on $z=0$. Considering a polar transformation $(v,z)=(r\cos\theta,r\sin\theta)$,   system \eqref{vz} can be written as
\begin{equation}
\notag
\frac{1}{r}\frac{dr}{d\theta}=\frac{H_4(\theta)+O(r)}{G_4(\theta)+O(r)},
\end{equation}
where 
$G_4(\theta)=-\sin^{m}\theta$ and $H_4(\theta)=\cos\theta\sin^{m-1}\theta$. It is easy to check that
$G_4(\theta)=0$   has  two roots $0$ and $\pi$ in $\theta\in[0,2\pi)$. However, since $H(0)=H(\pi)=0$, we need to desingularize $B$ furtherly. 

With 	the Briot--Bouquet transformation
$	z=\widetilde{z} v$, 
system \eqref{13} is rewritten as 
 \begin{equation}
 \label{14}
 \left\{\begin{aligned}
 \frac{dv}{d\delta}=&v{\w {z}}^{m-1}+ v^{3}+\sum_{i=r}^{m-1}\widehat{a}_iv^{3}{\w z}^{m-i}+v^{2}{\w z}^{m-1-n}+\sum_{i=s}^{n-1}\widehat{b}_iv^{2}{\w z}^{m-1-i},\\
 \frac{d\w {z}}{d\delta}=&-{\w z}^{m},
 \end{aligned}
 \right.
 \end{equation}	
	where $d\delta=v^{m-2}d\tau$. Obviously, $E:(0,0)$ of system \eqref{14} is the equilibrium. It follows from  $m>2n+1$ and $n\geq1$ that $m>3$.	Using a polar coordinate $(v,\w z)=(r\cos\theta, r\sin\theta)$,
	system \eqref{14} is transformed into the following polar form
	\begin{equation}
	\notag
	\frac{1}{r}\frac{dr}{d\theta}=\frac{H_5(\theta)+O(r)}{G_5(\theta)+O(r)},
	\end{equation}
	$
	G_5(\theta)=
-\sin\theta\cos^3\theta$
	and
	$
	H_5(\theta)=
	\cos^4\theta$.  
 $G_5(\theta)$ has two simple zeros $\theta=0$,  $\pi$ and two triple zeros   $\pi/2$, $3\pi/2$  in $[0,2\pi)$. Moreover, we can check that 
 $$
 G_5'(0)H_5(0)=G_5'(\pi)H_5(\pi)=-1<0.
 $$
  Thus, by $H_5(0)=H_2(\pi)=1$ and \cite[Theorem 3.7   of Chapter 2]{ZDHD}, system \eqref{14} has
 a unique orbit  connecting $E$ in respectively the directions $\theta= 0$ and $\pi$  as $\delta\to-\infty$. However, since $H_5(\pi/2)=H_2(3\pi/2)=0$, we need to blow up the two directions.
 
 The quasihomogeneous blow-up $v=\w z^{(m-3)/2}\w v$ sends sytem \eqref{14} to

\begin{equation}
\label{19}
\left\{\begin{aligned}
\frac{d\w v}{ds}=&{\frac{m-1}{2}}{\w v}{\w {z}}^{2}+ {\w v}^{3}+\sum_{i=r}^{m-1}\widehat{a}_i{\w v}^{3}{\w z}^{m-i}+{\w v}^{2}{\w z}^{\frac{m-2n+1}{2}}+\sum_{i=s}^{n-1}\widehat{b}_i{\w v}^{2}{\w z}^{\frac{m-2i+1}{2}},\\
\frac{d\w {z}}{ds}=&-{\w z}^3,
\end{aligned}
\right.
\end{equation}	
where $ds={\w z}^{m-3}d\delta$. It is clear that $F:(0,0)$ is an equilibrium.  Using a polar coordinate $(\w v,\w z)=(r\cos\theta, r\sin\theta)$, system \eqref{19} is transformed into the following polar
form 
\begin{equation}
\notag
\frac{1}{r}\frac{dr}{d\theta}=\frac{H_6(\theta)+O(r)}{G_6(\theta)+O(r)},
\end{equation} 
where 
$$	G_6(\theta)=
	-\sin\theta\cos\theta\left(\frac{m+1}{2}\sin^2\theta+\cos^2\theta\right)
	$$
	and 
	$$
	H_6(\theta)	=
	-\sin^4\theta+\frac{m-1}{2}\sin^2\theta\cos^2\theta+\cos^4\theta.
	$$	
	$\theta=0$,  $\pi/2$,   $\pi$, $3\pi/2$  are four simple zeros of
 $G_6(\theta)$   in $[0,2\pi)$. Moreover,  
 $$
 G_6'(0)H_6(0)=G_6'(\pi)H_6(\pi)=-1<0
 $$ 
 and 
 $$
 G_6'(\pi/2)H_6(\pi/2)=G_6'(3\pi/2)H_6(3\pi/2)=-(m+1)/2<0.
 $$ 
 By 
 $$
 H_6(0)=H_6(\pi)=1>0, H_3(\pi/2)=H_3(3\pi/2)=-1<0
 $$
  and \cite[Theorem 3.7   of Chapter 2]{ZDHD}, there is a unique orbit  connecting $F$ in respectively the directions $\theta = \pi/2$ and $3\pi/2$  as $s\to+\infty$, and 	a unique orbit  connecting $F$ in respectively the
 directions $\theta= 0$ and $\pi$  as $s\to-\infty$, as also seen  in Figure \ref{tu10}(a). Furtherly, we can obtain that there is a unique orbit  connecting $E$ in respectively the
 directions $\theta= \pi/2$ and $3\pi/2$  as $\delta\to+\infty$. The qualitative properties of $E$ are also  shown in Figure \ref{tu10}(b). Then, by blowing down the equilibrium $E$ of system \eqref{wz} to $B$ of system \eqref{vz},   there is a unique orbit connecting $B$ in  the
 direction $\theta= 0$  as $\tau\to-\infty$, and  a unique orbit connecting $B$ in  the
 direction $\pi$ as $\tau\to+\infty$,	see Figure \ref{tu10}(c).

		\section*{Acknowledgements}
This paper is supported by the  National Natural Science Foundation of China (No.   12171485).

\end{document}